\theoremstyle{theorem}
\theoremstyle{definition}
\newtheorem{definition}{Definition}
\newtheorem{prop}{Proposition}
\newtheorem{remark}{Remark}
\title{Hemi-slant submanifolds in metallic Riemannian manifolds}
\author{Cristina E. Hretcanu and Adara M. Blaga}
     \keywords{Metallic Riemannian structure, Golden Riemannian structure, almost product structure, induced structure on submanifold, slant submanifold, hemi-slant submanifold.}
     \subjclass[2010]{53B20, 53B25, 53C15}
\begin{document}

\normalfont
 \begin{abstract}
     The aim of our paper is to focus on some properties of hemi-slant submanifolds in metallic Riemannian manifolds. We give some characterizations for submanifolds to be hemi-slant submanifolds in metallic or Golden Riemannian manifolds and we obtain integrability conditions for the distributions involved. Examples of hemi-slant submanifolds in metallic and Golden Riemannian manifolds are given.
     \end{abstract}
     \maketitle

   \section{Introduction} \label{intro}

   The notion of metallic structure (and, in particular, Golden structure) on a Riemannian manifold was initially studied in (\cite{CrHr},\cite{Hr2},\cite{Hr3},\cite{Hr4}).
   In (\cite{Hr2}), the authors of the present paper studied the properties of the slant and semi-slant submanifolds in metallic or Golden Riemannian manifolds and obtained some integrability conditions for the distributions involved in the semi-slant submanifolds of Riemannian manifolds endowed with metallic or Golden Riemannian structures.

   The geometry of slant submanifolds in complex manifolds, studied by B.Y. Chen in (\cite{Chen3},\cite{Chen4}) in the early 1990’s, was extended to semi-slant submanifold, pseudo-slant submanifold and bi-slant submanifold, respectively, in different types of differentiable manifolds. Semi-slant submanifolds in almost Hermitian manifolds were introduced by N. Papagiuc (\cite{Papaghiuc}). Semi-slant submanifolds in Sasakian manifolds were studies by J.L. Cabrerizo \textit{et al.} in (\cite{Cabrerizo1},\cite{Cabrerizo2}). A. Cariazzo \textit{et al.} (\cite{Carriazo}) studied bi-slant immersion in almost Hermitian manifolds and pseudo-slant submanifold in almost Hermitian manifolds. Slant and semi-slant submanifolds in almost product Riemannian manifolds were studied in (\cite{Atceken1},\cite{Li&Liu},\cite{Sahin}). The pseudo-slant submanifolds (also called hemi-slant submanifolds) in Kenmotsu or nearly Kenmotsu manifolds (\cite{Atceken3},\cite{Atceken4}), in LCS-manifolds (\cite{Atceken5}) or in locally decomposable Riemannian manifolds (\cite{Atceken6}) were studied by M. At\c{c}eken \textit{et al.} Properties of hemi-slant submanifolds in locally product Riemannian manifolds were studied by H.M. Ta\c{s}tan and F. Ozdem in \cite{Tastan}.

   The purpose of the present paper is to investigate the properties of hemi-slant submanifolds in metallic (or Golden) Riemannian manifolds. We find some integrability conditions for the distributions which are involved in such types of submanifolds in metallic and Golden Riemannian manifolds and we give some examples of hemi-slant submanifolds in metallic (or Golden) Riemannian manifolds.

Using a polynomial structure on a manifold (\cite{Goldberg1},\cite{Goldberg2}) and the metallic numbers (\cite{Spinadel}), we defined the metallic structure $J$ (\cite{Hr4}).

The name of this structure is provided by the metallic number $\sigma _{p,q}=\frac{p+\sqrt{p^{2}+4q}}{2}$ (i.e. the positive solution of the equation $x^{2}-px-q=0$) for positive integer values of $p$ and $q$.

If $\overline{M}$ is an $m$-dimensional manifold endowed with a tensor field $J$ of type $(1,1)$ such that:
\begin{equation}\label{e1}
J^{2}= pJ+qI,
\end{equation}
for $p$, $q\in\mathbb{N}^*$, where $I$ is the identity operator on the Lie algebra $\Gamma(T\overline{M})$, then the structure $J$ is a \textit{metallic structure}. In this situation, the pair $(\overline{M},J)$ is called \textit{metallic manifold}.

In particular, if $p=q=1$ one obtains the \textit{Golden structure} (\cite{CrHr}) determined by a $(1,1)$-tensor field $J$  which verifies $J^{2}= J + I$. In this case, $(\overline{M},J)$ is called {\it Golden manifold} (\cite{CrHr}).

If ($\overline{M}, \overline{g})$ is a Riemannian manifold endowed with a metallic (or a Golden) structure $J$, such that the Riemannian metric $\overline{g}$ is $J$-compatible, i.e.:
\begin{equation} \label{e2}
\overline{g}(JX, Y)= \overline{g}(X, JY),
\end{equation}
for any $X, Y \in \Gamma(T\overline{M})$, then $(\overline{g},J)$ is called a {\it metallic} (or a {\it Golden) Riemannian structure}  and $(\overline{M},\overline{g},J)$ is a {\it metallic (or a Golden) Riemannian manifold} (\cite{Hr4}).

Moreover, we have:
\begin{equation} \label{e3}
\overline{g}(JX, JY)=\overline{g}(J^{2}X, Y) =p \overline{g}(JX,Y)+q \overline{g}(X,Y),
\end{equation}
for any $X, Y \in \Gamma(T\overline{M})$ (\cite{Hr4}).
\normalfont

Any almost product structure $F$ on $\overline{M}$ induces two metallic structures on $\overline{M}$:
\begin{equation}\label{e4}
J= \frac{p}{2}I \pm \frac{2\sigma _{p, q}-p}{2}F,
\end{equation}
where $I$ is the identity operator on the Lie algebra $\Gamma(T\overline{M})$ (\cite{Hr4}).

\section{On the metallic Riemannian manifolds and its submanifolds}

Let $M$ be an $m'$-dimensional submanifold, isometrically immersed in the $m$-dimensional metallic (or Golden) Riemannian manifold ($\overline{M}, \overline{g},J)$ with $m, m' \in \mathbb{N}^{*}$ and $m > m'$.
Let $T_{x}M$  be the tangent space of $M$ in a point $x \in M$ and $T_{x}^{\bot }M$ the normal space of $M$ in $x$. The tangent space $T_x\overline{M}$ can be decomposed into the direct sum:
$$T_x\overline{M}=T_x M\oplus T_x^{\perp}M,$$ for any $x\in M$. Let $i_{*}$ be the differential of the immersion $i: M \rightarrow\overline{M}$.

The induced Riemannian metric $g$ on $M$ is given by $g(X, Y)=\overline{g}(i_{*}X, i_{*}Y)$, for any $X, Y \in \Gamma(TM)$. For the simplification of the notations, in the rest of the paper we shall note by $X$ the vector field $i_{*}X$, for any $X \in \Gamma(TM)$.

If we denote by $TX$ and $NX$, respectively, the tangential and normal parts of $JX$, for any $X \in \Gamma(TM)$, then we get:
\begin{equation}\label{e5}
JX = TX + NX,
\end{equation}
 $T:\Gamma(TM)\rightarrow \Gamma(TM)$, $ TX:=(J X)^T$ and $N:\Gamma(TM)\rightarrow \Gamma(T^{\perp}M)$, $NX:=(J X)^{\perp}$.

For any $V \in \Gamma(T^{\perp}M)$, the tangential and normal parts of $JV$ satisfy:
\begin{equation}\label{e6}
JV = tV + nV,
\end{equation}
  $t:\Gamma(T^{\perp}M)\rightarrow \Gamma(TM)$, $tV:=(J V)^{T}$ and $ n:\Gamma(T^{\perp}M)\rightarrow \Gamma(T^{\perp}M)$, $nV:=(J V)^{\perp}$.

We remark that the maps $T$ and $n$ are $\overline{g}$-symmetric (\cite{Blaga_Hr}):
\begin{equation}\label{e7}
(i)\: \overline{g}(TX,Y)=\overline{g}(X,TY), \quad (ii)\: \overline{g}(nU,V)=\overline{g}(U,nV),
\end{equation}
for any $X, Y\in \Gamma(TM)$ and $U, V \in \Gamma(T^{\perp}M)$. Moreover, we get
\begin{equation}\label{e8}
\overline{g}(NX,U)=\overline{g}(X,tU),
\end{equation}
for any $X\in \Gamma(TM)$ and $U\in \Gamma(T^{\perp}M)$.

By using (\ref{e5}), (\ref{e6}) and (\ref{e1}), we obtain:
\begin{remark}
 If $M$ is a submanifold in a metallic Riemannian manifold $(\overline{M}, \overline{g}, J)$, then:
\begin{equation} \label{e99}
(i) \: T^{2}X = pTX+qX-tNX, \quad (ii) \: pNX= NTX+nNX
\end{equation}
and
\begin{equation} \label{e100}
(i) \: n^{2}V =  pnV+qV-NtV, \quad (ii) \: ptV= TtV+tnV,
\end{equation}
for any $X \in \Gamma(TM)$ and $V \in \Gamma(T^{\bot}M)$.
For $p=q=1$ and $M$ is a submanifold in a Golden Riemannian manifold $(\overline{M}, \overline{g}, J)$ then, for any $X \in \Gamma(TM)$ we get
$ T^{2}X = TX+X-tNX$,  $NX= NTX+ nNX$ and for any $V \in \Gamma(T^{\bot}M)$ we get $n^{2}V = nV+V-NtV$,  $tV= TtV+tnV$.
\end{remark}

\begin{remark}(\cite{Hr6})
Let $(\overline{M}, \overline{g})$ be a Riemannian manifold endowed with an almost product structure $F$ and let $J$ be one of the two metallic structures induced by $F$ on $\overline{M}$. If $M$ is a submanifold in the almost product Riemannian manifold $(\overline{M}, \overline{g}, F)$, then:
\begin{equation} \label{e9}
(i) \: TX = \frac{p}{2}X \pm \frac{2\sigma-p}{2}fX, \quad (ii) \: NX= \pm \frac{2\sigma-p}{2}\omega X
\end{equation}
and
\begin{equation} \label{e10}
(i) \: tV =  \pm \frac{2\sigma-p}{2}BV, \quad (ii) \: nV= \frac{p}{2}V \pm \frac{2\sigma-p}{2}CV,
\end{equation}
for any $X \in \Gamma(TM)$ and $V \in \Gamma(T^{\bot}M)$, where
$FX = fX + \omega X$, $FV = BV + CV$, with $fX:=(F X)^{T}$, $\omega X:=(FX)^{\perp}$, $BV:=(F V)^T$ and $CV:=(F V)^{\perp}$.
\end{remark}

In the next considerations we denote by $\overline{\nabla}$ and $\nabla $ the Levi-Civita connections on $(\overline{M},\overline{g})$ and its submanifold $(M,g)$, respectively.

The Gauss and Weingarten formulas are given by:
\begin{equation}\label{e11}
(i) \: \overline{\nabla}_{X}Y=\nabla_{X}Y+h(X,Y), \quad  (ii) \: \overline{\nabla}_{X}V=-A_{V}X+\nabla_{X}^{\bot}V,
\end{equation}
for any $X, Y \in \Gamma(TM)$ and $V \in \Gamma(T^{\bot}M)$, where $h$ is the second fundamental form and $A_{V}$ is the shape operator. The second fundamental form $h$ and the shape operator $A_{V}$ are related by:
 \begin{equation}\label{e12}
 \overline{g}(h(X, Y),V)=\overline{g}(A_{V}X, Y).
 \end{equation}

\begin{definition}\label{d1}(\cite{Hr5})
If $(\overline{M},\overline{g}, J)$ is a metallic (or Golden) Riemannian manifold and $J$ is parallel with respect to the Levi-Civita connection $\overline{\nabla}$ on $\overline{M}$ (i.e. $\overline{\nabla}J=0$), we say that  $(\overline{M},\overline{g}, J)$ is a {\it locally metallic (or locally Golden) Riemannian manifold}.
\end{definition}

The covariant derivatives of the tangential and normal parts of $JX$ (and $JV$), $T$ and $N$ ($t$ and $n$, respectively) are given by (\cite{Hr5},\cite{Atceken3}):
\begin{equation}\label{e13}
(i) \: (\nabla_{X}T)Y=\nabla_{X}TY - T(\nabla_{X}Y), \quad (ii) \:(\overline{\nabla}_{X}N)Y=\nabla_{X}^{\bot}NY - N(\nabla_{X}Y),
\end{equation}
and
\begin{equation}\label{e14}
(i) \: (\nabla_{X}t)V=\nabla_{X}tV - t(\nabla_{X}^{\bot}V), \quad (ii) \:(\overline{\nabla}_{X}n)V=\nabla_{X}^{\bot}nV - n(\nabla_{X}^{\bot}V),
\end{equation}
for any $X$, $Y \in \Gamma(TM)$ and $V \in \Gamma(T^{\bot}M) $. From $\overline{g}(JX,Y)=\overline{g}(X,JY)$, it follows:
\begin{equation} \label{e15}
\overline{g}((\overline{\nabla}_XJ)Y,Z)=\overline{g}(Y,(\overline{\nabla}_XJ)Z),
\end{equation}
for any $X$, $Y$, $Z\in \Gamma(T\overline{M})$. Moreover, if $M$ is an isometrically immersed submanifold in the metallic Riemannian manifold $(\overline{M},\overline{g},J)$, then (\cite{Blaga2}):
\begin{equation}\label{e16}
\overline{g}((\nabla_X T)Y,Z)=\overline{g}(Y,(\nabla_X T)Z),
\end{equation}
for any $X$, $Y$, $Z\in \Gamma(TM)$.

\begin{prop}
If $M$ is a submanifold in a locally metallic (or Golden) Riemannian manifold $(\overline{M},\overline{g},J)$, then the covariant derivatives of $T$ and $N$ verify:
\begin{equation}\label{e17}
(i)  (\nabla_{X}T)Y=A_{NY}X+th(X,Y), \quad (ii) \: (\overline{\nabla}_{X}N)Y=nh(X,Y)-h(X,TY),
\end{equation}
and
\begin{equation}\label{e18}
(i)  (\nabla_{X}t)V=A_{nV}X - TA_{V}X, \quad (ii) \: (\overline{\nabla}_{X}n)V=-h(X,tV)-NA_{V}X,
\end{equation}
for any $X$, $Y \in \Gamma(TM)$ and $V \in \Gamma(T^{\bot}M)$.
\end{prop}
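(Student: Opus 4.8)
The plan is to obtain all four identities from a single computation: expand $\overline{\nabla}_X(JY)$ (respectively $\overline{\nabla}_X(JV)$) in two different ways and compare tangential and normal components, using that $\overline{\nabla}J=0$ because $\overline{M}$ is locally metallic. Concretely, for $X,Y\in\Gamma(TM)$, on one hand $\overline{\nabla}_X(JY)=J(\overline{\nabla}_XY)=J(\nabla_XY+h(X,Y))=T(\nabla_XY)+N(\nabla_XY)+th(X,Y)+nh(X,Y)$ by the Gauss formula \eqref{e11}(i) together with the decompositions \eqref{e5} and \eqref{e6}. On the other hand $JY=TY+NY$ by \eqref{e5}, so $\overline{\nabla}_X(JY)=\overline{\nabla}_X(TY)+\overline{\nabla}_X(NY)=\nabla_X(TY)+h(X,TY)-A_{NY}X+\nabla_X^{\perp}(NY)$ by the Gauss and Weingarten formulas \eqref{e11}(i)--(ii).

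First I would equate these two expressions and separate the tangential part from the normal part. The tangential components give $T(\nabla_XY)+th(X,Y)=\nabla_X(TY)-A_{NY}X$, which upon rearranging and invoking the definition \eqref{e13}(i) of $(\nabla_XT)Y=\nabla_X(TY)-T(\nabla_XY)$ yields exactly \eqref{e17}(i): $(\nabla_XT)Y=A_{NY}X+th(X,Y)$. The normal components give $N(\nabla_XY)+nh(X,Y)=h(X,TY)+\nabla_X^{\perp}(NY)$, and rearranging with the definition \eqref{e13}(ii) of $(\overline{\nabla}_XN)Y=\nabla_X^{\perp}(NY)-N(\nabla_XY)$ gives \eqref{e17}(ii): $(\overline{\nabla}_XN)Y=nh(X,Y)-h(X,TY)$.

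Next I would repeat the same device for a normal vector field $V\in\Gamma(T^{\perp}M)$. Expanding $\overline{\nabla}_X(JV)=J(\overline{\nabla}_XV)=J(-A_VX+\nabla_X^{\perp}V)=-TA_VX-NA_VX+t\nabla_X^{\perp}V+n\nabla_X^{\perp}V$ via the Weingarten formula \eqref{e11}(ii) and the decompositions \eqref{e5}, \eqref{e6}; and separately $JV=tV+nV$ gives $\overline{\nabla}_X(JV)=\overline{\nabla}_X(tV)+\overline{\nabla}_X(nV)=\nabla_X(tV)+h(X,tV)-A_{nV}X+\nabla_X^{\perp}(nV)$ by \eqref{e11}(i)--(ii). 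Matching tangential parts and using the definition \eqref{e14}(i) of $(\nabla_Xt)V$ produces \eqref{e18}(i): $(\nabla_Xt)V=A_{nV}X-TA_VX$; matching normal parts and using \eqref{e14}(ii) produces \eqref{e18}(ii): $(\overline{\nabla}_Xn)V=-h(X,tV)-NA_VX$.

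There is no real obstacle here: everything is a bookkeeping exercise in which the only inputs are $\overline{\nabla}J=0$, the Gauss--Weingarten formulas, and the definitions \eqref{e5}, \eqref{e6}, \eqref{e13}, \eqref{e14}. The one point requiring a little care is making sure the tangential/normal splitting is applied consistently — in particular, that $T(\nabla_XY)$, $\nabla_X(TY)$, $A_{NY}X$, $A_{nV}X$, and $TA_VX$ are all genuinely tangent while $N(\nabla_XY)$, $\nabla_X^{\perp}(NY)$, $h(X,TY)$, $nh(X,Y)$, $h(X,tV)$, and $NA_VX$ are all genuinely normal, so that the separation of the single vector equation into its two component equations is legitimate. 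Once that is checked, the four identities drop out immediately, and the Golden case $p=q=1$ needs no separate treatment since the formulas \eqref{e17}--\eqref{e18} do not involve $p$ or $q$.
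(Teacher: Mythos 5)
Your proof is correct and is precisely the standard argument this proposition rests on (the paper itself omits the proof, deferring to the cited references): expand $\overline{\nabla}_X(JY)$ and $\overline{\nabla}_X(JV)$ in two ways using $\overline{\nabla}J=0$ together with the Gauss--Weingarten formulas, then compare tangential and normal components against the definitions \eqref{e13}--\eqref{e14}. All four identities check out, and your remark that the Golden case needs no separate treatment is also right.
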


\begin{remark}
If $M$ is a totally geodesic submanifold in a locally metallic (or locally Golden) Riemannian manifold $(\overline{M},\overline{g},J)$, then:
$(\nabla_{X}T)Y=(\overline{\nabla}_{X}N)Y=(\nabla_{X}t)V=(\overline{\nabla}_{X}n)V=0,$
for any $X,Y \in \Gamma(TM)$ and $V \in \Gamma(T^{\perp}M)$.
\end{remark}

\begin{remark}
If $M$ is a submanifold in a locally metallic (or locally Golden) Riemannian manifold $(\overline{M},\overline{g},J)$, then we obtain:
\begin{equation}\label{e19}
\overline{g}((\overline{\nabla}_{X}N)Y,V )= \overline{g}((\nabla_{X}t)V,Y),
\end{equation}
for any $X$, $Y \in \Gamma(TM)$ and $V \in \Gamma(T^{\bot}M)$.
\end{remark}
\begin{proof}
From (\ref{e17})(ii) and (\ref{e7})(ii) we get
$$ \overline{g}((\overline{\nabla}_{X}N)Y,V )= \overline{g}(h(X,Y),nV )-\overline{g}(h(X,TY),V )= \overline{g}(A_{nV}X - TA_{V}X,Y) $$
and using (\ref{e18})(i) we obtain (\ref{e19}).
\end{proof}

\begin{prop}
Let $M$ be a submanifold in a locally metallic (or locally Golden) Riemannian manifold ($\overline{M},\overline{g},J$). Then $(\overline{\nabla}_{X}N)Y=0$ and $(\nabla_{X}t)V=0$, for any $X$, $Y \in \Gamma(TM)$, $V \in \Gamma(T^{\bot}M)$ if and only if the shape operator $A$ verifies:
\begin{equation}\label{e33}
 A_{nV}X=TA_{V}X=A_{V}TX.
\end{equation}
\end{prop}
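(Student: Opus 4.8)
The plan is to establish the two implications of the equivalence separately, in each case translating the differential conditions into algebraic ones by means of the formulas $(\ref{e17})$(ii) and $(\ref{e18})$(i) for the covariant derivatives of $N$ and $t$, and then testing the resulting normal- or tangent-valued identities against arbitrary fields, using $(\ref{e12})$, the $\overline{g}$-symmetry $(\ref{e7})$ of $T$ and $n$, the symmetry of the shape operator $A_{V}$, and the nondegeneracy of $g$.

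For the direct implication, assume $(\overline{\nabla}_{X}N)Y=0$ and $(\nabla_{X}t)V=0$ for all $X,Y\in\Gamma(TM)$ and $V\in\Gamma(T^{\bot}M)$. From $(\ref{e18})$(i), the vanishing of $(\nabla_{X}t)V$ reads $A_{nV}X-TA_{V}X=0$, which is precisely the first equality $A_{nV}X=TA_{V}X$. To reach the second equality I would use $(\ref{e17})$(ii): the condition $(\overline{\nabla}_{X}N)Y=0$ means $nh(X,Y)=h(X,TY)$ for all $X,Y$. Since $h$ is symmetric, replacing $(X,Y)$ by $(Y,X)$ gives $nh(X,Y)=h(Y,TX)$, hence $h(X,TY)=h(Y,TX)$. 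Pairing this identity with an arbitrary $V\in\Gamma(T^{\bot}M)$ and using $(\ref{e12})$, the symmetry of $A_{V}$ and the symmetry $(\ref{e7})$(i) of $T$, one obtains $\overline{g}(TA_{V}X,Y)=\overline{g}(A_{V}TX,Y)$ for all $Y$, so $TA_{V}X=A_{V}TX$ by nondegeneracy of $g$. Together with the first equality this gives $(\ref{e33})$.

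For the converse, assume $(\ref{e33})$. Then $A_{nV}X-TA_{V}X=0$, so $(\ref{e18})$(i) yields $(\nabla_{X}t)V=0$. For the vanishing of $(\overline{\nabla}_{X}N)Y$ I would invoke $(\ref{e19})$: $\overline{g}((\overline{\nabla}_{X}N)Y,V)=\overline{g}((\nabla_{X}t)V,Y)=0$ for every $V\in\Gamma(T^{\bot}M)$, whence $(\overline{\nabla}_{X}N)Y=0$; alternatively one pairs $(\ref{e17})$(ii) directly with $V$ and uses $\overline{g}(nh(X,Y),V)=\overline{g}(h(X,Y),nV)=\overline{g}(A_{nV}X,Y)$ together with $\overline{g}(h(X,TY),V)=\overline{g}(TA_{V}X,Y)$.

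The only step that is not purely mechanical is obtaining the second equality $A_{V}TX=TA_{V}X$ in the direct implication: it is not visible from $(\ref{e18})$(i) alone and requires exploiting the symmetry of the second fundamental form in the relation $(\ref{e17})$(ii) set equal to zero, combined with the symmetry of $A_{V}$ and of $T$; the remaining passages are direct substitutions. I also note that $(\ref{e19})$ already shows that the conditions $(\overline{\nabla}_{X}N)Y=0$ for all $X,Y$ and $(\nabla_{X}t)V=0$ for all $X,V$ are equivalent to one another, so in the statement the hypothesis could be formulated with just one of them.
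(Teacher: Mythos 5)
Your argument is correct and follows essentially the same route as the paper's proof: both translate the vanishing of $(\overline{\nabla}_{X}N)Y$ and $(\nabla_{X}t)V$ into the two equalities of (\ref{e33}) by pairing (\ref{e17})(ii) and (\ref{e18})(i) against arbitrary fields via (\ref{e12}), the symmetry of $h$, the $\overline{g}$-symmetry of $T$ and $n$, and the duality (\ref{e19}). Your closing observation that (\ref{e19}) makes the two hypotheses equivalent is exactly the mechanism the paper uses implicitly in its final step.
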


\begin{proof}
From (\ref{e7})(ii) we get $\overline{g}(nh(X,Y),V)=\overline{g}(h(X,Y),nV)$, for any $X, Y \in \Gamma(TM)$, $V \in \Gamma(T^{\bot}M)$. Thus, we obtain:
$$\overline{g}((\overline{\nabla}_{X}N)Y,V)=\overline{g}(h(X,Y),nV)-\overline{g}(h(X,TY),V)=\overline{g}(A_{nV}X,Y)-\overline{g}(A_{V}X,TY),$$
for any $X, Y \in \Gamma(TM)$, $V \in \Gamma(T^{\bot}M)$. From (\ref{e17})(ii) and (\ref{e12}) we have
\begin{equation}\label{e34}
\overline{g}((\overline{\nabla}_{X}N)Y,V)=\overline{g}(A_{nV}X-TA_{V}X,Y)=\overline{g}(A_{nV}Y-A_{V}TY,X),
\end{equation}
for any $X, Y \in \Gamma(TM)$, $V \in \Gamma(T^{\bot}M)$. Thus, from  (\ref{e34}) and (\ref{e19}) we obtain the conclusion.
\end{proof}

\begin{prop}(\cite{Hr6})
If $M$ is a submanifold in a locally metallic (or locally Golden) Riemannian manifold $(\overline{M},\overline{g},J)$, then:
\begin{equation}\label{e20}
T([X,Y])=\nabla_{X}TY-\nabla_{Y}TX-A_{NY}X+A_{NX}Y
\end{equation}
and
\begin{equation}\label{e21}
N([X,Y])=h(X,TY)-h(TX,Y)+\nabla_{X}^{\bot}NY-\nabla_{Y}^{\bot}NX,
\end{equation}
for any $X, Y \in \Gamma(TM)$, where $\nabla$ is the Levi-Civita connection on $\Gamma(TM)$.
\end{prop}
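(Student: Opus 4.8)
The plan is to compute $J[X,Y]$ in two ways and then separate tangential from normal components, using in an essential way that the ambient is \emph{locally} metallic, i.e.\ $\overline{\nabla}J=0$. Since $X,Y$ are tangent to $M$, so is $[X,Y]$, hence by \eqref{e5} one has $J[X,Y]=T[X,Y]+N[X,Y]$, and the whole task is to identify these two pieces.

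First I would use that $\overline{\nabla}$ is torsion-free to write $[X,Y]=\overline{\nabla}_XY-\overline{\nabla}_YX$, and then apply the parallel tensor $J$:
$$J[X,Y]=\overline{\nabla}_X(JY)-\overline{\nabla}_Y(JX).$$
Next I substitute $JY=TY+NY$ from \eqref{e5} and expand with the Gauss and Weingarten formulas \eqref{e11}: the tangential part gives $\overline{\nabla}_X(TY)=\nabla_XTY+h(X,TY)$, and the normal part gives $\overline{\nabla}_X(NY)=-A_{NY}X+\nabla_X^{\bot}NY$. Treating $\overline{\nabla}_Y(JX)$ the same way and subtracting, while using the symmetry $h(X,Y)=h(Y,X)$, yields
$$J[X,Y]=\bigl(\nabla_XTY-\nabla_YTX-A_{NY}X+A_{NX}Y\bigr)+\bigl(h(X,TY)-h(TX,Y)+\nabla_X^{\bot}NY-\nabla_Y^{\bot}NX\bigr).$$
Comparing with $J[X,Y]=T[X,Y]+N[X,Y]$ and noting that the first parenthesis lies in $\Gamma(TM)$ while the second lies in $\Gamma(T^{\bot}M)$ gives \eqref{e20} and \eqref{e21} simultaneously.

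An equivalent route is to start from \eqref{e13}: writing $\nabla_XTY-\nabla_YTX=(\nabla_XT)Y-(\nabla_YT)X+T[X,Y]$ (using that $\nabla$ is torsion-free) and $\nabla_X^{\bot}NY-\nabla_Y^{\bot}NX=(\overline{\nabla}_XN)Y-(\overline{\nabla}_YN)X+N[X,Y]$, and then substituting \eqref{e17} together with $h(X,Y)=h(Y,X)$; the cross terms $th(X,Y)$ and $nh(X,Y)$ cancel in the antisymmetrization, leaving exactly $A_{NY}X-A_{NX}Y$ and $h(TX,Y)-h(X,TY)$, respectively.

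There is essentially no hard step here; the points that need care are: remembering that $\overline{\nabla}J=0$ is precisely the locally metallic hypothesis (so the argument genuinely uses it and would not survive for a general metallic structure), keeping the tangential/normal bookkeeping consistent throughout the expansion, and invoking the symmetry of the second fundamental form to rewrite $h(Y,TX)$ as $h(TX,Y)$ so that the formulas come out in the stated form.
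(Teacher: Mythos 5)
Your argument is correct: applying $J$ to $[X,Y]=\overline{\nabla}_XY-\overline{\nabla}_YX$, using $\overline{\nabla}J=0$ to pass $J$ inside, expanding $JY=TY+NY$ via the Gauss and Weingarten formulas, and then separating tangential and normal parts yields \eqref{e20} and \eqref{e21} exactly, with the symmetry of $h$ handling $h(Y,TX)=h(TX,Y)$. The paper itself gives no proof here (the proposition is quoted from \cite{Hr6}), but your derivation is the standard one for this statement and both of your suggested routes check out.
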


\section{Hemi-slant submanifolds in metallic Riemannian manifolds}

In this section we recall the definition of a slant distribution and of a bi-slant submanifold in a metallic (or Golden) Riemannian manifold. Then, we define the hemi-slant submanifold and find some properties regarding the distributions involved in this type of submanifold, using a similar definition as for Riemannian product manifold (\cite{Tastan}).

\begin{definition}(\cite{Hr6})\label{d2}
Let $M$ be an immersed submanifold in a metallic (or Golden) Riemannian manifold $(\overline{M},\overline{g},J)$. A differentiable distribution $D$ on $M$ is called a {\it slant distribution} if the angle $\theta_{D}$ between $JX_{x}$ and the vector subspace $D_{x}$ is constant, for any $x \in M$ and any nonzero vector field $X_{x} \in \Gamma(D_{x})$. The constant angle $\theta_{D}$ is called the {\it slant angle} of the distribution $D$.
\end{definition}

\begin{prop}(\cite{Hr6})\label{pD}
Let $D$ be a differentiable distribution on a submanifold $M$ of a metallic (or Golden) Riemannian manifold $(\overline{M},\overline{g},J)$. The distribution $D$ is a slant distribution if and only if there exists a constant $\lambda \in [0, 1]$ such that:
\begin{equation}\label{e22}
(P_{D}T)^{2}X= \lambda(pP_{D}TX+qX),
\end{equation}
 for any $ X \in \Gamma(D)$, where $P_{D}$ is the orthogonal projection on $D$. Moreover, if $\theta_{D}$ is the slant angle of $D$, then it satisfies $\lambda = \cos^{2} \theta_{D}$.
 \end{prop}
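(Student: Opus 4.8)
The plan is to reduce \eqref{e22} to a pointwise statement about $\cos^2\theta_D$ and then pass between that scalar (quadratic-form) identity and the operator identity by a self-adjointness/polarization argument. Fix a nonzero $X\in\Gamma(D)$ and decompose $JX=TX+NX$ as in \eqref{e5}, with $TX\in\Gamma(TM)$ and $NX\in\Gamma(T^{\perp}M)$. Since $NX$ is orthogonal to $TM$ and hence to $D$, the orthogonal projection of $JX$ onto $D_x$ is exactly $P_DTX$, so by definition of the angle between a vector and a subspace, $\cos\theta_D(X)=\dfrac{\overline{g}(JX,P_DTX)}{|JX|\,|P_DTX|}$ (note that $J$ is invertible by \eqref{e1}, so $|JX|\neq 0$). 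Using $\overline{g}(NX,P_DTX)=0$ and the fact that $P_D$ is an orthogonal projection, the numerator equals $\overline{g}(TX,P_DTX)=|P_DTX|^2$, whence $\cos^2\theta_D(X)=|P_DTX|^2/|JX|^2$.

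Next I would rewrite both numerator and denominator in terms of the endomorphism $P_DT$. For the denominator, \eqref{e3} gives $|JX|^2=\overline{g}(JX,JX)=p\,\overline{g}(JX,X)+q\,\overline{g}(X,X)$, and since $\overline{g}(NX,X)=0$, $X=P_DX$ and $P_D$ is self-adjoint, this equals $p\,\overline{g}(P_DTX,X)+q\,\overline{g}(X,X)$. For the numerator, \eqref{e7}(i) (symmetry of $T$), the self-adjoint idempotent $P_D$, and $X\in\Gamma(D)$ yield
\[
|P_DTX|^2=\overline{g}(TX,P_DTX)=\overline{g}(X,T(P_DTX))=\overline{g}(X,(P_DT)^2X).
\]
Hence, for every nonzero $X\in\Gamma(D)$,
\[
\cos^2\theta_D(X)=\frac{\overline{g}((P_DT)^2X,X)}{p\,\overline{g}(P_DTX,X)+q\,\overline{g}(X,X)}.
\]

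The ``if'' direction is then immediate: if \eqref{e22} holds with $\lambda\in[0,1]$, pairing with $X$ shows the right-hand side of the last display equals $\lambda$ for all nonzero $X\in\Gamma(D)$, so $\cos^2\theta_D(X)$ is the constant $\lambda$; since $\theta_D(X)\in[0,\pi/2]$, the angle itself is constant and $D$ is slant with $\cos^2\theta_D=\lambda$. For the ``only if'' direction, if $D$ is slant with angle $\theta_D$, put $\lambda:=\cos^2\theta_D\in[0,1]$; the displayed formula gives $\overline{g}\big((P_DT)^2X-\lambda(pP_DTX+qX),X\big)=0$ for all $X\in\Gamma(D)$. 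The endomorphism $S:=P_DT$ of $D$ is $g$-self-adjoint, since $\overline{g}(SX,Y)=\overline{g}(TX,Y)=\overline{g}(X,TY)=\overline{g}(X,SY)$ for $X,Y\in\Gamma(D)$, so $A:=S^2-\lambda pS-\lambda qI$ is a $g$-self-adjoint endomorphism of $D$ with $\overline{g}(AX,X)=0$ for all $X\in\Gamma(D)$. Polarizing ($\overline{g}(A(X+Y),X+Y)=0$) and using self-adjointness gives $\overline{g}(AX,Y)=0$ for all $X,Y\in\Gamma(D)$, and nondegeneracy of $g$ on $D$ (together with $AX\in\Gamma(D)$) forces $AX=0$, which is precisely \eqref{e22} with $\lambda=\cos^2\theta_D$.

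The only places requiring genuine care are the identity $|P_DTX|^2=\overline{g}((P_DT)^2X,X)$, which uses essentially that $X$ lies in $D$ (so $P_DX=X$) and that $P_D$ can be moved across the $\overline{g}$-pairing, and, in the converse, the upgrade from the scalar identity to the operator identity \eqref{e22}; the latter is exactly what the self-adjointness of $P_DT|_D$ plus polarization delivers. Everything else is routine manipulation with \eqref{e3}, \eqref{e5} and \eqref{e7}.
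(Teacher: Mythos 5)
The paper does not actually prove this proposition: it is quoted verbatim from \cite{Hr6} and used as a known tool, so there is no in-text argument to compare against. Judged on its own, your proof is correct and follows the standard route for such slant criteria (the one used in \cite{Hr6} and going back to Chen and to Cabrerizo et al.): identify $\cos\theta_D(X)$ with $\|P_DTX\|/\|JX\|$, rewrite $\|JX\|^2$ via $\overline{g}(JX,JX)=p\,\overline{g}(JX,X)+q\,\overline{g}(X,X)$ and $\|P_DTX\|^2$ via the $\overline{g}$-symmetry of $T$ as $\overline{g}((P_DT)^2X,X)$, and read off the constancy of the angle. Where your write-up is actually more careful than the usual published sketch is the converse: the slant hypothesis only gives the scalar identity $\overline{g}\bigl((P_DT)^2X-\lambda(pP_DTX+qX),X\bigr)=0$, and you explicitly upgrade this to the operator identity \eqref{e22} by observing that $P_DT|_D$ is $\overline{g}$-self-adjoint and polarizing; this step is routinely left implicit in the literature, so making it explicit is a genuine (if small) improvement rather than a deviation. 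One cosmetic point: when $P_DTX=0$ the quotient $\overline{g}(JX,P_DTX)/(\|JX\|\,\|P_DTX\|)$ is formally $0/0$, so it is cleaner to take $\cos\theta_D(X)=\|P_DTX\|/\|JX\|$ as the definition of the angle to a subspace from the start; your final formula $\cos^2\theta_D(X)=\overline{g}((P_DT)^2X,X)/\|JX\|^2$ already handles that degenerate case correctly.
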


\begin{definition}(\cite{Hr6})\label{d3}
Let $M$ be an immersed submanifold in a metallic (or Golden) Riemannian manifold $(\overline{M},\overline{g},J)$. We say that $M$ is a {\it bi-slant submanifold} of $\overline{M}$ if there exist two orthogonal differentiable distribution $D_{1}$ and $D_{2}$ on $M$ such that $TM = D_{1}\oplus D_{2}$, and $D_{1}$, $D_{2}$ are slant distributions with the slant angles $\theta_{1}$ and $\theta_{2}$, respectively. Moreover, $M$ is a {\it proper bi-slant submanifold} of $\overline{M}$ if $dim(D_{1})\cdot dim(D_{2}) \neq 0 $.
\end{definition}

A particular case of {\it bi-slant submanifold} in a metallic Riemannian manifold $(\overline{M},\overline{g},J)$ is the hemi-slant submanifold, defined in a similar manner as hemi-slant submanifold of the locally product Riemannian manifold (\cite{Tastan}):

\begin{definition}\label{d6}
An immersed submanifold $M$ in a metallic (or Golden) Riemannian manifold $(\overline{M},\overline{g},J)$ is a \textit{hemi-slant submanifold} if there exist two orthogonal distributions
$D^{\theta}$ and $D^{\perp}$ on $M$ such that:

(1) $TM$ admits the orthogonal direct decomposition $TM = D^{\theta}\oplus D^{\perp}$;

(2) The distribution $D^{\theta}$ is slant with angle $\theta \in [0,\frac{\pi}{2}]$;

(3) The distribution $D^{\perp}$ is anti-invariant distribution (i.e. $J(D^{\perp}) \subseteq \Gamma(T^{\perp}M$)).

Moreover, if $dim(D^{\theta})\cdot dim(D^{\perp}) \neq 0 $ and $\theta \in (0, \frac{\pi}{2})$, then $M$ is a proper hemi-slant submanifold.

\end{definition}

\begin{remark}
 If $M$ is a hemi-slant submanifold in a metallic Riemannian manifold $(\overline{M},\overline{g},J)$, with $TM = D^{\theta}\oplus D^{\perp}$, for particular cases we get:

- if $\theta=0$ and $dim(D^{\perp})=0$, then $M$ is an invariant submanifold;

- if $dim(D^{\theta})=0$ or $\theta = \frac{\pi}{2}$, then $M$ is an anti-invariant submanifold;

- if $dim(D^{\perp})=0$ and $\theta \neq 0$, then $M$ is a slant submanifold;

- if $dim(D^{\theta}) \cdot dim(D^{\perp}) \neq 0$ and $\theta=0$, then $M$ is a semi-invariant submanifold.

\end{remark}

In a similar manner as in (\cite{Hr6}, Proposition 10), we obtain:
\begin{remark}
 If $M$ is a hemi-slant submanifold in a metallic Riemannian manifold $(\overline{M},\overline{g},J)$, with $TM = D^{\theta}\oplus D^{\perp}$, then we get that $M$ is an anti-invariant submanifold if $\theta=\frac{\pi}{2}$ and $g(JX,Y)=0$, for any $X \in \Gamma(D^{\theta})$ and $X \in \Gamma(D^{\perp})$.
\end{remark}

 Let $M$ be a hemi-slant submanifold in a metallic Riemannian manifold $(\overline{M},\overline{g},J)$, with $TM = D^{\theta}\oplus D^{\perp}$ and let $P_{1}$ and $P_{2}$ be the orthogonal projections on $D^{\theta}$ and $D^{\perp}$, respectively. Thus, for any $X \in \Gamma(TM)$, we can consider the decomposition of $X=P_{1}X + P_{2}X$, where $P_{1}X \in \Gamma(D^{\theta})$ and $P_{2}X \in \Gamma(D^{\perp})$. From $J(D^{\perp}) \subseteq \Gamma(T^{\perp}M)$ we obtain:

\begin{prop}
 If $M$ is a hemi-slant submanifold in a metallic (or Golden) Riemannian manifold $(\overline{M},\overline{g},J)$, then:
\begin{equation}\label{e23}
JX= TP_{1}X+ NP_{1} X + NP_{2}X = TP_{1}X+ N X
\end{equation}
and
\begin{equation}\label{e24}
(i) JP_{2}X= NP_{2}X,  \: (ii) TP_{2}X=0,   \: (iii) TP_{1} X \in \Gamma(D^{\theta}),
\end{equation}
for any $X \in \Gamma(TM)$.
\end{prop}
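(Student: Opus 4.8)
The plan is to start from the orthogonal decomposition $X = P_{1}X + P_{2}X$, with $P_{1}X \in \Gamma(D^{\theta})$ and $P_{2}X \in \Gamma(D^{\perp})$, apply $J$, and treat the two summands separately, using the definitions in (\ref{e5}) and the anti-invariance of $D^{\perp}$ from Definition~\ref{d6}(3).

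First I would handle the $D^{\perp}$-part. Since $P_{2}X \in \Gamma(D^{\perp})$ and $D^{\perp}$ is anti-invariant, we have $JP_{2}X \in \Gamma(T^{\perp}M)$. Hence the tangential component of $JP_{2}X$ vanishes, i.e. $TP_{2}X = (JP_{2}X)^{T} = 0$, which is (\ref{e24})(ii), and $JP_{2}X$ coincides with its normal component, i.e. $JP_{2}X = NP_{2}X = (JP_{2}X)^{\perp}$, which is (\ref{e24})(i). For the $D^{\theta}$-part, (\ref{e5}) applied to $P_{1}X \in \Gamma(TM)$ gives $JP_{1}X = TP_{1}X + NP_{1}X$.

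Next, for (\ref{e24})(iii), I would show that $T$ maps $D^{\theta}$ into itself; this is the only step requiring an argument rather than a direct substitution, and hence the main obstacle, although it is short. Take $X \in \Gamma(D^{\theta})$ and an arbitrary $Y \in \Gamma(D^{\perp})$. Using $NX \in \Gamma(T^{\perp}M)$, the $J$-compatibility (\ref{e2}), and the anti-invariance $JY \in \Gamma(T^{\perp}M)$, one computes
\[
g(TX,Y) = \overline{g}(JX - NX, Y) = \overline{g}(JX, Y) = \overline{g}(X, JY) = 0,
\]
since $X \in \Gamma(TM)$ is orthogonal to $\Gamma(T^{\perp}M)$. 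As $TX \in \Gamma(TM) = \Gamma(D^{\theta}) \oplus \Gamma(D^{\perp})$ and $TX$ is orthogonal to $D^{\perp}$, we conclude $TX \in \Gamma(D^{\theta})$, hence $TP_{1}X \in \Gamma(D^{\theta})$ for any $X \in \Gamma(TM)$.

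Finally, combining $JX = JP_{1}X + JP_{2}X = TP_{1}X + NP_{1}X + NP_{2}X$ gives the first equality in (\ref{e23}); the second follows from the linearity of $N$, which yields $NX = NP_{1}X + NP_{2}X$ (equivalently, from $TX = TP_{1}X + TP_{2}X = TP_{1}X$ by (\ref{e24})(ii)). Everything apart from (\ref{e24})(iii) is bookkeeping with the decomposition $TM = D^{\theta} \oplus D^{\perp}$ and the definitions of $T$ and $N$.
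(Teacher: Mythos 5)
Your proof is correct and follows essentially the same route the paper intends: the paper gives no explicit proof, noting only that the proposition follows from $J(D^{\perp})\subseteq\Gamma(T^{\perp}M)$ together with the decomposition $X=P_{1}X+P_{2}X$, and your argument fills in exactly those routine steps. Your inline computation for (\ref{e24})(iii) amounts to re-deriving the $\overline{g}$-symmetry of $T$ from (\ref{e7})(i) combined with $TP_{2}=0$, which is precisely how the paper establishes $T(D^{\theta})\subseteq D^{\theta}$ in a later remark.
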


\begin{remark}
 If $M$ is a hemi-slant submanifold in a metallic (or Golden) Riemannian manifold $(\overline{M},\overline{g},J)$, then:
\begin{equation}\label{e101}
 T^{\perp}M= N(D^{\theta}) \oplus N(D^{\perp}) \oplus \mu,
 \end{equation}
 where $\mu$ is an invariant subbundle of $T^{\perp}M$.
 \end{remark}
 \begin{proof}
For any $X \in \Gamma(D^{\theta})$ and $Z \in \Gamma(D^{\perp})$ we get
$$\overline{g}(NX,NZ)=\overline{g}(JX,JZ)=p \overline{g}(X,TZ)+q \overline{g}(X,Z)=0.$$
Thus, the distributions $N(D^{\theta})$ and $N(D^{\perp})$ are mutually perpendicular in $T^{\perp}M$. If we denote by $\mu$ the orthogonal complementary subbundle of $J(TM)$ in $T^{\perp}M$, then we obtain (\ref{e101}).
\end{proof}

\begin{remark}
 If $M$ is a hemi-slant submanifold in a metallic (or Golden) Riemannian manifold $(\overline{M},\overline{g},J)$, then:
$$\overline{g}(JP_{1}X,TP_{1}X)=\cos \theta(X) \| TP_{1}X\| \cdot \|JP_{1}X\|$$
and the cosine of the slant angle $\theta(X)=:\theta$ of the distribution $D^{\theta}$ is constant, for any nonzero $X \in \Gamma(TM)$. Thus, we get:
\begin{equation}\label{e25}
\cos \theta =\frac{\overline{g}(JP_{1}X, TP_{1}X)}{\|TP_{1}X\| \cdot \|JP_{1}X\|}=\frac{\|TP_{1}X \|}{\|JP_{1}X\|},
\end{equation}
for any nonzero $X \in \Gamma(TM)$.
\end{remark}

\begin{prop}\label{p11}
If $M$ is a hemi-slant submanifold in a metallic Riemannian manifold $(\overline{M},\overline{g},J)$, then:
\begin{equation}\label{e26}
\overline{g}(TP_{1}X,TP_{1}Y)=\cos^2 \theta[p \overline{g}(TP_{1}X,P_{1}Y)+q \overline{g}(P_{1}X,P_{1}Y)]
\end{equation}
and
\begin{equation}\label{e27}
\overline{g}(NX,NY)=\sin^2 \theta[p \overline{g}(TP_{1}X,P_{1}Y)+q \overline{g}(P_{1}X,P_{1}Y)],
\end{equation}
for any $X$, $Y\in \Gamma(TM)$.
\end{prop}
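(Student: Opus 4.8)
The plan is to reduce both identities to the single norm relation of the preceding remark, $\|TP_{1}X\|=\cos\theta\,\|JP_{1}X\|$ (equation~(\ref{e25})), combined with the compatibility formula~(\ref{e3}), and then upgrade the resulting quadratic identities to the stated bilinear ones by polarization. Concretely, for an arbitrary $X\in\Gamma(TM)$ I would first write the orthogonal decomposition $JP_{1}X=TP_{1}X+NP_{1}X$ given by~(\ref{e5}), with $TP_{1}X\in\Gamma(TM)$ and $NP_{1}X\in\Gamma(T^{\perp}M)$, so that Pythagoras yields $\|JP_{1}X\|^{2}=\|TP_{1}X\|^{2}+\|NP_{1}X\|^{2}$. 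From~(\ref{e25}) we get $\|TP_{1}X\|^{2}=\cos^{2}\theta\,\|JP_{1}X\|^{2}$ and hence $\|NP_{1}X\|^{2}=\sin^{2}\theta\,\|JP_{1}X\|^{2}$. Next I would evaluate $\|JP_{1}X\|^{2}$ using~(\ref{e3}): $\|JP_{1}X\|^{2}=\overline{g}(J^{2}P_{1}X,P_{1}X)=p\,\overline{g}(JP_{1}X,P_{1}X)+q\,\overline{g}(P_{1}X,P_{1}X)$, and since the normal part $NP_{1}X$ is orthogonal to the tangent vector $P_{1}X$, this equals $p\,\overline{g}(TP_{1}X,P_{1}X)+q\,\overline{g}(P_{1}X,P_{1}X)$. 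Substituting into the two norm identities above gives precisely~(\ref{e26}) and~(\ref{e27}) in the diagonal case $Y=X$.

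To pass to arbitrary $X,Y\in\Gamma(TM)$ I would polarize. The difference of the two sides of~(\ref{e26}), viewed as a function of $(X,Y)$, is a symmetric bilinear form: $\overline{g}(TP_{1}X,TP_{1}Y)$ and $\overline{g}(P_{1}X,P_{1}Y)$ are manifestly symmetric, and $\overline{g}(TP_{1}X,P_{1}Y)=\overline{g}(P_{1}X,TP_{1}Y)=\overline{g}(TP_{1}Y,P_{1}X)$ by the $\overline{g}$-symmetry of $T$ in~(\ref{e7})(i); a symmetric bilinear form vanishing on the diagonal vanishes identically, so~(\ref{e26}) holds for all $X,Y$. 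The same argument gives~(\ref{e27}); alternatively, once~(\ref{e26}) is available one can derive~(\ref{e27}) directly by expanding $\overline{g}(JX,JY)$ via~(\ref{e23}) and tangent/normal orthogonality as $\overline{g}(TP_{1}X,TP_{1}Y)+\overline{g}(NX,NY)$, comparing with the value furnished by~(\ref{e3}) on $D^{\theta}$, and isolating $\overline{g}(NX,NY)$. (One may equally well run the whole argument through the characterization~(\ref{e22}) of Proposition~\ref{pD}, applying $\overline{g}(\cdot,W)$ to $(P_{D^{\theta}}T)^{2}Z=\cos^{2}\theta(pP_{D^{\theta}}TZ+qZ)$ and using~(\ref{e7})(i).)

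The computations are entirely routine; the substance of the statement is the factor $\cos^{2}\theta$, which is exactly the point at which the slant hypothesis is used, via~(\ref{e25}) (equivalently~(\ref{e22})). The only bookkeeping that needs attention is keeping track of which terms carry the projection $P_{1}$ and which do not, so that every cross-term involving $D^{\perp}$ is killed by the orthogonal splitting $TM=D^{\theta}\oplus D^{\perp}$ together with $TP_{2}=0$ and $J(D^{\perp})\subseteq\Gamma(T^{\perp}M)$ from~(\ref{e24}).
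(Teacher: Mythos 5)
Your argument is correct and is essentially the paper's own proof: both rest on polarizing the norm relation (\ref{e25}) to get $\overline{g}(TP_{1}X,TP_{1}Y)=\cos^{2}\theta\,\overline{g}(JP_{1}X,JP_{1}Y)$, expanding $\overline{g}(JP_{1}X,JP_{1}Y)$ with (\ref{e3}), killing the cross-terms by tangent--normal orthogonality and (\ref{e24})(iii) to obtain (\ref{e26}), and then subtracting the tangential part of the decomposition of $J$ to isolate the normal contribution in (\ref{e27}). The only point worth flagging is that your Pythagoras route literally produces $\overline{g}(NP_{1}X,NP_{1}Y)$ on the left of (\ref{e27}) rather than $\overline{g}(NX,NY)$ (the two differ by $q\,\overline{g}(P_{2}X,P_{2}Y)$), but the paper's proof makes exactly the same silent identification, so this is a feature of the statement itself and not a divergence between the two arguments.
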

\begin{proof}
Taking $X+Y$ in (\ref{e25}) we have:
$$
\overline{g}(TP_{1}X,TP_{1}Y)=\cos^{2}\theta  \overline{g}(JP_{1}X,JP_{1}Y)= \cos^{2}\theta[p\overline{g}(JP_{1}X,P_{1}Y)+q\overline{g}(P_{1}X,P_{1}Y)],$$
for any $X$, $Y\in \Gamma(TM)$ and using (\ref{e24})(iii) we get (\ref{e26}).

From (\ref{e23}) we get $\overline{g}(TP_{1}X,TP_{1}Y)=\overline{g}(JP_{1}X,JP_{1}Y)-\overline{g}(NX,NY),$ for any $X$, $Y\in \Gamma(TM)$ and it implies (\ref{e27}).
\end{proof}

\begin{remark}
A hemi-slant submanifold $M$ in a Golden Riemannian manifold $(\overline{M},\overline{g},J)$ with the slant angle $\theta$ of the distribution $D^{\theta}$ verifies (\ref{e26}) and (\ref{e27}) with $p=q=1$.
 \end{remark}

\begin{prop}
Let $M$ be a hemi-slant submanifold in a metallic Riemannian manifold $(\overline{M}, \overline{g},J)$ with the slant angle $\theta$ of the distribution $D^{\theta}$. Then:
\begin{equation}\label{e28}
(TP_{1})^2=\cos^2 \theta(p TP_{1}+qI),
\end{equation}
where $I$ is the identity on $\Gamma(D^{\theta})$ and
\begin{equation}\label{e29}
\nabla ((TP_{1})^2)=p \cos^2 \theta \nabla (TP_{1}).
\end{equation}
\end{prop}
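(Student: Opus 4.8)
The plan is to read off (\ref{e28}) as an identity of endomorphisms of $\Gamma(D^{\theta})$ from the slant relation (\ref{e26}), using the $\overline{g}$-symmetry of $T$ in (\ref{e7})(i) and the nondegeneracy of $g$ on $D^{\theta}$, and then to obtain (\ref{e29}) simply by covariantly differentiating (\ref{e28}) and using that the slant angle is constant.

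First I would establish (\ref{e28}). Fix $X\in\Gamma(D^{\theta})$, so $P_{1}X=X$. By (\ref{e24})(iii) we have $TP_{1}X\in\Gamma(D^{\theta})$, hence $P_{1}(TP_{1}X)=TP_{1}X$ and therefore $(TP_{1})^{2}X=TP_{1}(TP_{1}X)=T(TP_{1}X)$; applying (\ref{e24})(iii) once more to $TP_{1}X$ shows that this vector again lies in $\Gamma(D^{\theta})$. Consequently it suffices to evaluate $\overline{g}((TP_{1})^{2}X,Y)$ against an arbitrary $Y\in\Gamma(D^{\theta})$. Using (\ref{e7})(i) and then $P_{1}Y=Y$,
\[
\overline{g}((TP_{1})^{2}X,Y)=\overline{g}(T(TP_{1}X),Y)=\overline{g}(TP_{1}X,TY)=\overline{g}(TP_{1}X,TP_{1}Y).
\]
Now (\ref{e26}) rewrites the right-hand side as $\cos^{2}\theta\,[\,p\,\overline{g}(TP_{1}X,P_{1}Y)+q\,\overline{g}(P_{1}X,P_{1}Y)\,]$, which equals $\overline{g}(\cos^{2}\theta\,(p\,TP_{1}X+qX),Y)$ since $P_{1}X=X$ and $P_{1}Y=Y$. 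As both $(TP_{1})^{2}X$ and $\cos^{2}\theta\,(p\,TP_{1}X+qX)$ belong to $\Gamma(D^{\theta})$ and $g$ is nondegenerate there, this gives (\ref{e28}); for a general $X\in\Gamma(TM)$ one only replaces $X$ by $P_{1}X$.

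Next I would derive (\ref{e29}) by applying the covariant derivative $\nabla_{X}$, $X\in\Gamma(TM)$, to (\ref{e28}), viewing $(TP_{1})^{2}$ and $TP_{1}$ as fields of endomorphisms of $D^{\theta}$. Since $D^{\theta}$ is slant, its slant angle $\theta$ is constant, so $\cos^{2}\theta$ is a constant scalar that passes through $\nabla_{X}$, while $\nabla(qI)=0$; hence $\nabla((TP_{1})^{2})=\cos^{2}\theta\,\nabla(p\,TP_{1}+qI)=p\cos^{2}\theta\,\nabla(TP_{1})$, which is (\ref{e29}).

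The only step that needs care is the reduction in the proof of (\ref{e28}): one must know in advance that $(TP_{1})^{2}X$ stays inside $D^{\theta}$, so that testing it against vectors $Y\in\Gamma(D^{\theta})$ determines it completely — and this is precisely the content of (\ref{e24})(iii); once that is in hand the rest is a routine computation. (If in (\ref{e29}) one insists on reading the operators as $(1,1)$-tensors on all of $M$, the term $qI$ should be read as $q\,P_{1}$, and then (\ref{e29}) holds up to the extra term $q\cos^{2}\theta\,\nabla(P_{1})$, which vanishes exactly when $D^{\theta}$ is a parallel distribution; with $I$ the identity of $\Gamma(D^{\theta})$ as stated in the proposition, no such proviso is required.)
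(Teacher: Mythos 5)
Your argument is correct. Note that the paper itself states this proposition without proof, so there is no "paper proof" to match against; your derivation of (\ref{e28}) from (\ref{e26}) together with the $\overline{g}$-symmetry of $T$ in (\ref{e7})(i), the invariance $TP_{1}X\in\Gamma(D^{\theta})$ from (\ref{e24})(iii), and the nondegeneracy of $g$ on $D^{\theta}$ is exactly the natural way to fill the gap, and it is essentially equivalent to the even shorter route of invoking Proposition \ref{pD} with $\lambda=\cos^{2}\theta$, since on $\Gamma(D^{\theta})$ one has $P_{1}TX=TX=TP_{1}X$ by (\ref{e24})(iii), so $(P_{1}T)^{2}$ and $(TP_{1})^{2}$ coincide there. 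Your handling of (\ref{e29}) by differentiating (\ref{e28}) with $\theta$ constant is what the authors intend, and your parenthetical distinction between reading $I$ as the identity of $\Gamma(D^{\theta})$ versus as $P_{1}$ on all of $TM$ (where an extra $q\cos^{2}\theta\,\nabla P_{1}$ term would appear unless $D^{\theta}$ is parallel) is a point the paper glosses over; making it explicit is a genuine improvement rather than a deviation.
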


\begin{remark}
 Let $M$ be a hemi-slant submanifold in a metallic (or Golden) Riemannian manifold $(\overline{M},\overline{g},J)$, with $TM = D^{\theta}\oplus D^{\perp}$. Then $T(D^{\theta})=D^{\theta}$ and $T(D^{\perp})={0}$.
\end{remark}

\begin{proof}
By using (\ref{e7})(i), we get $\overline{g}(TX,Z)=\overline{g}(X,TZ)=0,$ for any $X \in \Gamma(D^{\theta})$, $Z \in \Gamma(D^{\perp})$.  Thus, $T(D^{\theta}) \perp D^{\perp}$. Since $T(D^{\theta}) \subset \Gamma(TM)$ we obtain that $T(D^{\theta}) \subseteq D^{\theta}$.

Moreover, from  (\ref{e28}) we obtain $X=\frac{1}{q} T(TX-p \cos^2 \theta X)$, for any $X \in \Gamma(D^{\theta})$ (i.e. $P_{1}X=X$), where $(\overline{M},\overline{g},J)$ is a metallic Riemannian manifold.
If $(\overline{M},\overline{g},J)$ is a Golden Riemannian manifold, then $X=T(TX- \cos^2 \theta X)$, for any $X \in \Gamma(D^{\theta})$.
Thus, $D^{\theta} \subseteq T(D^{\theta})$. Since $T(D^{\theta}) \subseteq D^{\theta}$, we get $T(D^{\theta})=D^{\theta}$.

By using (\ref{e24})(ii) we obtain that $D^{\perp}$ is anti-invariant with respect to $J$ and $T(D^{\perp})={0}$.
\end{proof}

\begin{prop}
Let $M$ be an immersed submanifold in a metallic Riemannian manifold $(\overline{M},\overline{g},J)$. Then $M$ is a hemi-slant submanifold in $\overline{M}$ if and only if there exists a constant $\lambda \in [0, 1]$ such that:
 $$D=\{ X \in \Gamma(TM) | T^{2}X= \lambda(pTX+qX)\}$$ is a distribution and $TY=0$, for any $Y$ orthogonal to $D$, $Y\in \Gamma(TM)$, where $p$, $q\in\mathbb{N}^*$.
\end{prop}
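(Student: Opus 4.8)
The plan is to prove the two implications separately, using the slant characterization from Proposition~\ref{pD} together with the structure equations in Remark~1.

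\medskip

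\emph{Necessity.} Suppose $M$ is hemi-slant with $TM = D^{\theta}\oplus D^{\perp}$ and let $\lambda = \cos^2\theta$. First I would show that the set $D$ defined in the statement coincides with $D^{\theta}$, hence is automatically a distribution. For the inclusion $D^{\theta}\subseteq D$: by Proposition~7, for $X\in\Gamma(D^{\theta})$ we have $(TP_1)^2 X = \cos^2\theta(p\,TP_1 X + qX)$, and since $P_1 X = X$ (and $TP_1 X = TX$ because $T(D^{\theta})=D^{\theta}$ by the Remark following Proposition~8), this reads $T^2 X = \lambda(pTX+qX)$, so $X\in D$. For the reverse inclusion $D\subseteq D^{\theta}$: take $X\in D$ and decompose $X = P_1 X + P_2 X$. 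Using $TP_2 X = 0$ from \eqref{e24}(ii) and the fact (Remark following Proposition~8) that $T$ preserves the splitting, one computes $T^2 X = (TP_1)^2 X = \cos^2\theta(p\,TP_1 X + q P_1 X)$; comparing with the defining relation $T^2 X = \lambda(pTX + qX) = \lambda(p\,TP_1 X + q P_1 X + q P_2 X)$ and projecting onto $D^{\perp}$ forces $\lambda q\, P_2 X = 0$, so $P_2 X = 0$ (as $\lambda q \neq 0$ when $\lambda\neq 0$; the case $\lambda = 0$ is handled by noting $D = D^{\perp}$ is still a distribution, or is the anti-invariant case). Hence $X\in D^{\theta}$. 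Finally, $TY = 0$ for $Y\perp D = D^{\theta}$, i.e. $Y\in\Gamma(D^{\perp})$, is exactly \eqref{e24}(ii).

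\medskip

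\emph{Sufficiency.} Conversely, assume such a $\lambda\in[0,1]$ exists, that $D$ is a distribution, and that $TY = 0$ for all $Y\in\Gamma(TM)$ orthogonal to $D$. Set $D^{\theta} := D$ and $D^{\perp} := D^{\perp}$ (the orthogonal complement of $D$ in $TM$), so that $TM = D^{\theta}\oplus D^{\perp}$ by construction, giving condition~(1). For condition~(3): since $TY = 0$ for $Y\in\Gamma(D^{\perp})$, the relation \eqref{e5} gives $JY = NY\in\Gamma(T^{\perp}M)$, so $J(D^{\perp})\subseteq\Gamma(T^{\perp}M)$, i.e. $D^{\perp}$ is anti-invariant. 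For condition~(2): I first need $T(D^{\theta})\subseteq D^{\theta}$, i.e. that $P_1 T X = T X$ for $X\in\Gamma(D)$. This follows because for $X\in\Gamma(D)$ and $Y\in\Gamma(D^{\perp})$, $\overline{g}(TX, Y) = \overline{g}(X, TY) = 0$ by \eqref{e7}(i) and $TY=0$; hence $TX\perp D^{\perp}$, so $TX\in\Gamma(D^{\theta})$ and thus $P_{D^{\theta}}TX = TX$. Therefore, on $D^{\theta}$ the defining equation $T^2 X = \lambda(pTX+qX)$ is literally $(P_{D^{\theta}}T)^2 X = \lambda(p\, P_{D^{\theta}}T X + qX)$, which by Proposition~\ref{pD} says precisely that $D^{\theta}$ is a slant distribution with slant angle $\theta$ satisfying $\cos^2\theta = \lambda$, so $\theta\in[0,\frac{\pi}{2}]$. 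This establishes all three conditions of Definition~\ref{d6}.

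\medskip

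The main obstacle I anticipate is the careful bookkeeping in the necessity direction showing $D = D^{\theta}$ — specifically extracting $P_2 X = 0$ from the component-wise comparison and handling the degenerate subcase $\lambda = 0$ (where the argument degenerates and one must instead observe directly that $D$ consists of the vectors $X$ with $T^2 X = 0$, which together with the structure relations identifies it with $D^{\perp}$ or forces the anti-invariant situation). A secondary subtlety is making sure that the identity $T^2 X = (TP_1)^2 X$ for $X\in\Gamma(D)$ is legitimate, which rests on the splitting-preservation of $T$ established via \eqref{e7}(i) and \eqref{e24}(ii); once that is in hand, everything reduces to Proposition~\ref{pD} applied to the restriction $P_{D^{\theta}}T$ on $D^{\theta}$.
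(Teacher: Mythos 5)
Your proof is correct and follows essentially the same route as the paper's: necessity via the identity $(TP_{1})^{2}=\cos^{2}\theta\,(pTP_{1}+qI)$ from \eqref{e28}, and sufficiency by reading the defining equation of $D$ as the slant criterion of Proposition~\ref{pD} once $T(D)\subseteq D$ is checked. You merely fill in details the paper leaves implicit (the verification that the set $D$ actually coincides with $D^{\theta}$ when $\lambda\neq 0$, the degenerate case $\lambda=0$, and the use of \eqref{e7}(i) to get $P_{D}TX=TX$), which is a welcome tightening rather than a different argument.
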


\begin{proof}
If $M$ is a hemi-slant submanifold in a metallic Riemannian manifold $(\overline{M},\overline{g},J)$, with $D^{\theta}:=D$ and $TM = D^{\theta}\oplus D^{\perp}$ then, from (\ref{e28}) and $\theta(X) \neq 0$ we have $\lambda=\cos^2 \theta \in [0,1]$.

Conversely, if there exists a real number $\lambda\in[0,1]$ such that $T^2X=\lambda(pTX+qX)$, for any $X \in \Gamma(D)$, it follows that $\cos^2 \theta(X)=\lambda$ which implies that $\theta(X)=\arccos(\sqrt{\lambda})$ does not depend on $X$. If we consider the orthogonal direct sum $TM=D \oplus D^{\bot}$, since $T(D)\subseteq D$ and $TY=0$, for any $Y$ orthogonal to $D$, $Y\in \Gamma(TM)$, we obtain that $M$ is a hemi-slant submanifold in $\overline{M}$ with $D^{\theta}:=D$.
\end{proof}

\begin{remark}
An immersed submanifold $M$ in a Golden Riemannian manifold $(\overline{M},\overline{g},J)$ is a hemi-slant submanifold in $\overline{M}$ if and only if there exists a constant $\lambda \in [0, 1]$ such that $$D=\{ X \in \Gamma(TM) | T^{2}X= \lambda(TX+X)\}$$ is a distribution and $TY=0$, for any $Y \in \Gamma(TM)$ orthogonal to $D$.
\end{remark}

\textbf{Examples 1:}
Let $\mathbb{R}^{4}$ be the Euclidean space endowed with the usual Euclidean metric $<\cdot,\cdot>$.
Let $f: M \rightarrow \mathbb{R}^{4}$ be the immersion given by:
$$f(u,v)=(u \cos t, u \sin t, v,\frac{\sigma}{\sqrt{q}}v),$$
where $M :=\{(u,v) \mid  u>0, t \in (0, \frac{\pi}{2})\}$ and $\sigma:=\sigma_{p,q}=\frac{p+\sqrt{p^{2}+4q}}{2}$ is the metallic number ($p, q \in N^{*}$).

We can find a local orthonormal frame on $TM$ given by:
 $$Z_{1}= \cos t \frac{\partial}{\partial x_{1}} + \sin t \frac{\partial}{\partial x_{2}}, \quad
 Z_{2}=\frac{\partial}{\partial x_{3}} + \frac{\sigma}{\sqrt{q}}\frac{\partial}{\partial x_{4}}.$$

We define the metallic structure $J : \mathbb{R}^{4} \rightarrow \mathbb{R}^{4} $ by:
$$
 J(X_{1},X_{2},X_{3},X_{4})=(\sigma X_{1},\overline{\sigma} X_{2},\sigma X_{3},\overline{\sigma}  X_{4}),
 $$
and we can easily verify that $J^{2}X=p J + q I$ and $<JX, Y> = <X, JY>$, for any $X:=(X_{1},X_{2},X_{3},X_{4})$, $Y:=(Y_{1},Y_{2},Y_{3},Y_{4}) \in \mathbb{R}^{4}$.
 Thus, we obtain:
 $$JZ_{1}= \sigma \cos t \frac{\partial}{\partial x_{1}} + \overline{\sigma} \sin t \frac{\partial}{\partial x_{2}}, \quad
   JZ_{2}= \sigma \frac{\partial}{\partial x_{3}} + \frac{\sigma\overline{\sigma}}{\sqrt{q}}\frac{\partial}{\partial x_{4}}.$$

We remark that $<JZ_{2}, Z_{1}> = <JZ_{2}, Z_{2}>=0$, thus $JZ_{2} \perp span \{Z_{1},Z_{2}\}$.

We find that $\|Z_{1}\|^{2}=1$, $\|JZ_{1}\|^{2}= \sigma^{2} \cos^{2}t +\overline{\sigma}^{2}\sin^{2}t=p (\sigma \cos^{2}t +\overline{\sigma}\sin^{2}t)+q$
and $<JZ_{1}, Z_{1}>=\sigma \cos^{2}t +\overline{\sigma}\sin^{2}t$. Thus, we get
  $$ \cos \theta = \frac{<JZ_{1},Z_{1}>}{\|Z_{1}\|\cdot \|JZ_{1}\|}=\frac{\sigma \cos^{2}t +\overline{\sigma}\sin^{2}t}{\sqrt{\sigma^{2} \cos^{2}t +\overline{\sigma}^{2}\sin^{2}t}}=
 \frac{\sigma \cos^{2}t +\overline{\sigma}\sin^{2}t}{\sqrt{p (\sigma \cos^{2}t +\overline{\sigma}\sin^{2}t)+q}}.$$

 In particular, for $t=\frac{\pi}{4}$ we get $\cos \theta = \frac{\sigma +\overline{\sigma}}{\sqrt{\sigma^{2} +\overline{\sigma}^{2}}}$.

We define the distributions $D^{\theta}=span\{Z_{1}\}$ and $D^{\perp}=span\{Z_{2}\}$. We have $J(D^{\perp})\subset \Gamma(T^{\perp}M)$ and
$D^{\theta}$ is a slant distribution, with the slant angle $\theta = \arccos \frac{\sigma \cos^{2}t +\overline{\sigma}\sin^{2}t}{\sqrt{p (\sigma \cos^{2}t +\overline{\sigma}\sin^{2}t)+q}}.$
The Riemannian metric tensor of $D^{\theta} \oplus D^{\perp}$ is given by
$
g=du^{2} + \frac{p\sigma+2q}{q}d v^{2}.
$
Thus, $M$ is a hemi-slant submanifold in the metallic Riemannian manifold  $(\mathbb{R}^{4}, <\cdot,\cdot>, J)$, with $TM=D^{\theta} \oplus D^{\perp}$.

In particular, for $p=q=1$ and $\phi:=\sigma_{1,1}=\frac{1+\sqrt{5}}{2}$ is the Golden number ($\overline{\phi}:=1-\phi$), the immersion $f: M \rightarrow \mathbb{R}^{4}$ is given by $f(u,v)=(u \cos t, u \sin t, v,\phi v)$ and the Golden structure $J : \mathbb{R}^{4} \rightarrow \mathbb{R}^{4} $ can be defined by
$$ J(X_{1},X_{2},X_{3},X_{4})=(\phi X_{1},\overline{\phi} X_{2},\phi X_{3},\overline{\phi}  X_{4}).$$ 

The distribution $D^{\perp}=span\{Z_{2}\}$ verifies $J(D^{\perp})\subset \Gamma(T^{\perp}M)$ and $D^{\theta}=span\{Z_{1}\}$ is a slant distribution, with the slant angle $\theta = \arccos \frac{\phi \cos^{2}t +\overline{\phi}\sin^{2}t}{ \sqrt{(\phi \cos^{2}t +\overline{\phi}\sin^{2}t)+1}}$
and the Riemannian metric tensor of $D^{\theta} \oplus D^{\perp}$ is given by $g=du^{2} + (\phi+2)d v^{2}.$
Thus, $TM=D^{\theta} \oplus D^{\perp}$ and $M$ is a hemi-slant submanifold in the Golden Riemannian manifold  $(\mathbb{R}^{4}, <\cdot,\cdot>, J)$.

If we consider the metallic structure $\overline{J} : \mathbb{R}^{4} \rightarrow \mathbb{R}^{4} $ given by
$$ \overline{J}(X_{1},X_{2},X_{3},X_{4})=(\sigma X_{1},\sigma X_{2},\sigma X_{3},\overline{\sigma}  X_{4}),$$
then we obtain:
$\overline{J}Z_{1}= \sigma Z_{1}$ and $\overline{J}Z_{2}= \sigma \frac{\partial}{\partial x_{3}} + \frac{\sigma\overline{\sigma}}{\sqrt{q}}\frac{\partial}{\partial x_{4}}.$
In this case we obtain the distributions $D^{\perp}=span\{Z_{2}\}$ and
$D^{\theta}=span\{Z_{1}\}$ with the slant angle $\theta = \arccos 1 = 0.$ Thus, $TM=D^{\theta} \oplus D^{\perp}$ and $M$ is a semi-invariant submanifold in the metallic Riemannian manifold  $(\mathbb{R}^{4}, <\cdot,\cdot>, \overline{J})$. Similarly, for $p=q=1$ we obtain that $M$ is a semi-invariant submanifold in the Golden Riemannian manifold  $(\mathbb{R}^{4}, <\cdot,\cdot>, \overline{J})$.

\textbf{Examples 2:}
Let $\mathbb{R}^{7}$ be the Euclidean space endowed with the usual Euclidean metric $<\cdot,\cdot>$.
Let $f: M \rightarrow \mathbb{R}^{7}$ be the immersion given by:
$$f(u,v,w)=(\frac{1}{\sqrt{3}} u \cos t,\frac{1}{\sqrt{3}} u \sin t, v,\frac{\sigma}{\sqrt{q}}v, \frac{\sqrt{q}}{\sigma}w, w, \frac{\sqrt{2}}{\sqrt{3}}u),$$
where $M :=\{(u,v,w) \mid u>0, t \in (0, \frac{\pi}{2})\}$ and $\sigma:=\sigma_{p,q}$ is the metallic number ($p, q \in N^{*}$).

We can find a local orthonormal frame on $TM$ given by:
 $$Z_{1}= \frac{1}{\sqrt{3}}\cos t \frac{\partial}{\partial x_{1}} + \frac{1}{\sqrt{3}}\sin t \frac{\partial}{\partial x_{2}}+\frac{\sqrt{2}}{\sqrt{3}}\frac{\partial}{\partial x_{7}},
 \quad Z_{2}=\frac{\partial}{\partial x_{3}} + \frac{\sigma}{\sqrt{q}}\frac{\partial}{\partial x_{4}},
 \quad
 Z_{3}=\frac{\sqrt{q}}{\sigma} \frac{\partial}{\partial x_{5}} + \frac{\partial}{\partial x_{6}}.$$

We define the metallic structure $J : \mathbb{R}^{7} \rightarrow \mathbb{R}^{7} $ by:
$$
 J(X_{1},X_{2},X_{3},X_{4},X_{5},X_{6},X_{7})=(\sigma X_{1},\overline{\sigma} X_{2},\sigma X_{3},\overline{\sigma}  X_{4},\sigma X_{5},\overline{\sigma} X_{6},\sigma X_{7})
 $$
 and we can easily verify that $J^{2}X=p J + q I$ and $<JX, Y> = <X, JY>$, for any $X:=(X_{1},X_{2},X_{3},X_{4},X_{5},X_{6},X_{7})$, $Y:=(Y_{1},Y_{2},Y_{3},Y_{4},Y_{5},Y_{6},Y_{7}) \in \mathbb{R}^{7}$.
 Thus, we obtain:
 $$JZ_{1}= \frac{1}{\sqrt{3}}\sigma \cos t \frac{\partial}{\partial x_{1}} + \frac{1}{\sqrt{3}}\overline{\sigma} \sin t \frac{\partial}{\partial x_{2}}+\frac{\sqrt{2}}{\sqrt{3}}\sigma\frac{\partial}{\partial x_{7}},$$
   $$ JZ_{2}= \sigma \frac{\partial}{\partial x_{3}} - \sqrt{q}\frac{\partial}{\partial x_{4}}, \quad
   JZ_{3}= \sqrt{q} \frac{\partial}{\partial x_{5}} + \overline{\sigma}\frac{\partial}{\partial x_{6}}.$$

We find that $JZ_{2} \perp span \{Z_{1},Z_{2},Z_{3}\}$, $JZ_{3} \perp span \{Z_{1},Z_{2},Z_{3}\}$.
Moreover, we have $\|Z_{1}\|^{2}=1$, $\|Z_{2}\|^{2}=\frac{p\sigma+2q}{q}$ and $\|Z_{3}\|^{2}=\frac{p\sigma+2q}{p\sigma+q}$.

Thus, we get
  $$ \cos \theta = \frac{<JZ_{1},Z_{1}>}{\|Z_{1}\|\cdot \|JZ_{1}\|}=\frac{\sigma (\cos^{2}t+2) +\overline{\sigma}\sin^{2}t}{\sqrt{3[\sigma^{2} (\cos^{2}t+2) +\overline{\sigma}^{2}\sin^{2}t]}}.$$

In particular, for $t=\frac{\pi}{4}$ we get $\cos \theta = \frac{5\sigma +\overline{\sigma}}{\sqrt{3(5\sigma^{2} +\overline{\sigma}^{2})}}$.

We define the distributions $D^{\theta}=span\{Z_{1}\}$ and $D^{\perp}=span\{Z_{2},Z_{3}\}$. We have $J(D^{\perp})\subset \Gamma(T^{\perp}M)$ and
$D^{\theta}$ is a slant distribution, with the slant angle
$\theta = \arccos \frac{\sigma (\cos^{2}t+2) +\overline{\sigma}\sin^{2}t}{\sqrt{3[\sigma^{2} (\cos^{2}t+2) +\overline{\sigma}^{2}\sin^{2}]}}.$
The Riemannian metric tensor of $D^{\theta} \oplus D^{\perp}$ is given by
$
g=du^{2} + \frac{p\sigma+2q}{q}d v^{2}+\frac{p\sigma+2q}{p\sigma+q}d w^{2}.
$
Thus, $TM=D^{\theta} \oplus D^{\perp}$ and $M$ is a hemi-slant submanifold in the metallic Riemannian manifold  $(\mathbb{R}^{7}, <\cdot,\cdot>, J)$.

In particular, for $p=q=1$ and $\phi:=\sigma_{1,1}$ is the Golden number ($\overline{\phi}:=1-\phi$), the immersion $f: M \rightarrow \mathbb{R}^{7}$ is given by $$f(u,v,w)=(\frac{1}{\sqrt{3}} u \cos t,\frac{1}{\sqrt{3}} u \sin t, v, \phi v, \overline{\phi}w, w, \frac{\sqrt{2}}{\sqrt{3}}u),$$ and the Golden structure $J : \mathbb{R}^{7} \rightarrow \mathbb{R}^{7}$  can be defined by
by $$ J(X_{1},X_{2},X_{3},X_{4},X_{5},X_{6},X_{7})=(\phi X_{1},\overline{\phi} X_{2},\phi X_{3},\overline{\phi}  X_{4},\phi X_{5},\overline{\phi} X_{6},\phi X_{7}).$$
The distributions $D^{\perp}=span\{Z_{2},Z_{3}\}$ verifies $J(D^{\perp})\subset \Gamma(T^{\perp}M)$ and slant distribution is $D^{\theta}=span\{Z_{1}\}$, with the slant angle
$\theta = \arccos \frac{\phi (\cos^{2}t+2) +\overline{\phi}\sin^{2}t}{\sqrt{3[\phi^{2} (\cos^{2}t+2) +\overline{\phi}^{2}\sin^{2}]}}.$
The Riemannian metric tensor of $D^{\theta} \oplus D^{\perp}$ is given by
$g=du^{2} + (\phi+2)d v^{2}+\frac{\phi+2}{\phi+1}d w^{2}.$
Thus, $TM=D^{\theta} \oplus D^{\perp}$ and $M$ is a hemi-slant submanifold in the metallic Riemannian manifold  $(\mathbb{R}^{7}, <\cdot,\cdot>, J)$.

If we consider the metallic structure $\overline{J} :\mathbb{R}^{7} \rightarrow \mathbb{R}^{7}$  defined by
by  
$$\overline{J}(X_{1},X_{2},X_{3},X_{4},X_{5},X_{6},X_{7})=(\sigma X_{1},\sigma X_{2},\sigma X_{3},\overline{\sigma}  X_{4},\sigma X_{5},\overline{\sigma} X_{6},\sigma X_{7}),$$
then we obtain:
$\overline{J}Z_{1}= \sigma Z_{1}$, $\overline{J}Z_{2}= \sigma \frac{\partial}{\partial x_{3}} - \sqrt{q}\frac{\partial}{\partial x_{4}}$ and 
   $\overline{J}Z_{3}= \sqrt{q} \frac{\partial}{\partial x_{5}} + \overline{\sigma}\frac{\partial}{\partial x_{6}}.$
In this case we obtain the distributions $D^{\perp}=span\{Z_{2},Z_{3}\}$ and $D^{\theta}=span\{Z_{1}\}$ with the slant angle $\theta = \arccos 1 = 0.$ Thus, $TM=D^{\theta} \oplus D^{\perp}$ and $M$ is a semi-invariant submanifold in the metallic Riemannian manifold  $(\mathbb{R}^{7}, <\cdot,\cdot>, \overline{J})$. Similarly, for $p=q=1$ we obtain that $M$ is a semi-invariant submanifold in the Golden Riemannian manifold  $(\mathbb{R}^{7}, <\cdot,\cdot>, \overline{J})$.

\section{On the integrability of the distributions of a hemi-slant submanifold}

In this section we investigate the conditions for the integrability of the distributions of a hemi-slant submanifold in a metallic (or Golden) Riemannian manifold.

\begin{prop}
If $M$ is a hemi-slant submanifold in a locally metallic (or locally Golden) Riemannian manifold $(\overline{M},\overline{g},J)$, then 
\begin{equation}\label{e30}
\nabla_{X}TY-\nabla_{Y}TX-A_{NY}X+A_{NX}Y \in \Gamma(D^{\theta}),
 \end{equation}
 for any $X,Y \in \Gamma(D^{\theta})$.
\end{prop}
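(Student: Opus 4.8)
The plan is to recognize the left-hand side of (\ref{e30}) as the image under $T$ of a Lie bracket, and then to use the structural fact that, on a hemi-slant submanifold, $T$ always takes values in $D^{\theta}$.

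First I would invoke the Proposition containing (\ref{e20}): since $(\overline{M},\overline{g},J)$ is locally metallic (resp.\ locally Golden) and $X,Y\in\Gamma(D^{\theta})\subseteq\Gamma(TM)$, formula (\ref{e20}) gives
$$T([X,Y])=\nabla_{X}TY-\nabla_{Y}TX-A_{NY}X+A_{NX}Y,$$
so the vector field appearing in (\ref{e30}) is exactly $T([X,Y])$. This is the only place the locally metallic hypothesis is used, namely to make (\ref{e20}) available; and since (\ref{e20}) is stated for all $X,Y\in\Gamma(TM)$, it applies verbatim here.

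Next, since $M$ is a submanifold, $[X,Y]\in\Gamma(TM)$, and the hemi-slant decomposition $TM=D^{\theta}\oplus D^{\perp}$ lets us write $[X,Y]=P_{1}[X,Y]+P_{2}[X,Y]$ with $P_{1}[X,Y]\in\Gamma(D^{\theta})$ and $P_{2}[X,Y]\in\Gamma(D^{\perp})$. Applying $T$ and using (\ref{e24})(ii) (equivalently, the Remark that $T(D^{\perp})=\{0\}$ on a hemi-slant submanifold) one obtains $T([X,Y])=TP_{1}[X,Y]$, and then (\ref{e24})(iii) (equivalently $T(D^{\theta})=D^{\theta}$) yields $TP_{1}[X,Y]\in\Gamma(D^{\theta})$. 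Combining this with the first step gives (\ref{e30}).

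I do not expect a genuine obstacle: the result is a direct consequence of (\ref{e20}) together with the fact that $T$ sends every tangent vector field of a hemi-slant submanifold into $D^{\theta}$ (it annihilates the anti-invariant part and preserves the slant part). The only point deserving a second glance is that (\ref{e20}) is indeed formulated for arbitrary tangent vector fields, so that no additional justification is needed to specialize it to $X,Y\in\Gamma(D^{\theta})$.
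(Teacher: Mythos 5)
Your proposal is correct and follows essentially the same route as the paper: identify the left-hand side of (\ref{e30}) with $T([X,Y])$ via (\ref{e20}), then observe that $T$ of any tangent vector field lies in $\Gamma(D^{\theta})$. The only cosmetic difference is that the paper verifies this last point directly, computing $\overline{g}(T([X,Y]),Z)=\overline{g}([X,Y],TZ)=0$ for $Z\in\Gamma(D^{\perp})$ using the $\overline{g}$-symmetry (\ref{e7})(i), whereas you invoke the already-established facts $T(D^{\perp})=\{0\}$ and $T(D^{\theta})\subseteq D^{\theta}$ through the decomposition $[X,Y]=P_{1}[X,Y]+P_{2}[X,Y]$ --- the same underlying argument packaged differently.
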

\begin{proof}
By using (\ref{e7})(i), we obtain:
$\overline{g}(T([X,Y]),Z)=\overline{g}([X,Y],TZ)=0,$ for any $X,Y \in \Gamma(D^{\theta})$ and $Z \in \Gamma(D^{\perp})$ (i.e. $TZ=0$). Thus, $T([X,Y]) \in \Gamma(D^{\theta})$ and using (\ref{e20}) we obtain (\ref{e30}).
\end{proof}

\begin{prop}
If $M$ is a hemi-slant submanifold in a locally metallic (or locally Golden) Riemannian manifold $(\overline{M},\overline{g},J)$, then the distribution $D^{\theta}$ is integrable.
\end{prop}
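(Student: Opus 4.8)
The plan is to show that for any $X,Y\in\Gamma(D^\theta)$ the bracket $[X,Y]$ has no component in $D^\perp$, equivalently that $P_2[X,Y]=0$. Since $D^\perp$ is anti-invariant, $P_2$ can be detected by applying $J$ (or $N$): a vector field $Z$ lies in $D^\theta$ as soon as its $D^\perp$-part vanishes, and that part is controlled by $N$ restricted to $D^\perp$, which is injective because $\overline{g}(NZ,NZ)=\overline{g}(JZ,JZ)=p\,\overline{g}(Z,TZ)+q\,\overline{g}(Z,Z)=q\|Z\|^2$ for $Z\in\Gamma(D^\perp)$ (using $TZ=0$). So it suffices to show $N([X,Y])=0$, or more precisely $NP_2[X,Y]=0$, for $X,Y\in\Gamma(D^\theta)$.

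First I would invoke Proposition with formula (\ref{e21}): for $X,Y\in\Gamma(D^\theta)$,
\[
N([X,Y])=h(X,TY)-h(TX,Y)+\nabla_X^\perp NY-\nabla_Y^\perp NX.
\]
By the preceding Proposition (the one giving (\ref{e30})) we already know $T([X,Y])\in\Gamma(D^\theta)$, so the tangential obstruction is gone and only the normal part must be handled. Next I would pair $N([X,Y])$ against an arbitrary $W\in\Gamma(D^\perp)$, i.e. compute $\overline{g}(N([X,Y]),NW)$, and try to show it vanishes; combined with $N([X,Y])$ being orthogonal to $N(D^\theta)$ and to $\mu$ (the decomposition (\ref{e101})), this forces $N([X,Y])\in N(D^\theta)$, hence $P_2[X,Y]$ maps to zero under $N$, hence $P_2[X,Y]=0$ by the injectivity noted above. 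Actually $N([X,Y]) = NP_1[X,Y] + NP_2[X,Y]$ and the first term automatically lies in $N(D^\theta)$; so I only need the $N(D^\perp)$-component, i.e. $\overline{g}(N([X,Y]),NW)=0$ for all $W\in\Gamma(D^\perp)$.

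To evaluate $\overline{g}(N([X,Y]),NW)$ I would rewrite it as $\overline{g}(J[X,Y]-T[X,Y],JW)=\overline{g}(J[X,Y],JW)$ since $T[X,Y]\in\Gamma(D^\theta)\perp JW$ (as $JW=NW\in T^\perp M$, well, one rewrites via (\ref{e8}): $\overline{g}(N[X,Y],NW)=\overline{g}(J[X,Y],JW)-\overline{g}(T[X,Y],TW)$ and $TW=0$). Then $\overline{g}(J[X,Y],JW)=p\,\overline{g}(J[X,Y],W)+q\,\overline{g}([X,Y],W)$ by (\ref{e3}). So everything reduces to showing $\overline{g}([X,Y],W)=0$ and $\overline{g}(J[X,Y],W)=\overline{g}([X,Y],JW)=0$ — but the second follows from the first together with $\overline{g}(JW,\cdot)$ landing us in $T^\perp M$... more carefully, $\overline{g}(J[X,Y],W)=\overline{g}([X,Y],JW)=\overline{g}([X,Y],NW)=0$ automatically since $[X,Y]\in\Gamma(TM)$. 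Hence it all collapses to the single identity $\overline{g}([X,Y],W)=0$ for $X,Y\in\Gamma(D^\theta)$, $W\in\Gamma(D^\perp)$, i.e. $g([X,Y],W)=0$.

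For that last identity I would use that $\overline{\nabla}=\nabla+h$ is metric and torsion-free together with the anti-invariance of $D^\perp$ and the hypothesis that $\overline{M}$ is locally metallic. Write $g([X,Y],W)=g(\nabla_XY,W)-g(\nabla_YX,W)=-g(Y,\nabla_XW)+g(X,\nabla_YW)$, expand $\nabla_XW,\nabla_YW$ via Gauss–Weingarten and the fact that $JW=NW$ so $\overline{\nabla}_X(JW)=\overline{\nabla}_X W$ pushed through $\overline{\nabla}J=0$, i.e. $J\overline{\nabla}_XW=\overline{\nabla}_X(NW)=-A_{NW}X+\nabla_X^\perp NW$. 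Taking tangential parts and pairing with $Y\in\Gamma(D^\theta)$ should, after using (\ref{e33})/(\ref{e17})(i) type identities and the symmetry of $A$, express $g([X,Y],W)$ in terms of $\overline{g}(h(X,Y),NW)=\overline{g}(A_{NW}X,Y)$, which is symmetric in $X,Y$ and therefore cancels in the antisymmetric combination. I expect this final computation — carefully tracking which terms are symmetric in $X\leftrightarrow Y$ and hence drop out of the bracket — to be the main obstacle; everything before it is bookkeeping with the already-established formulas (\ref{e21}), (\ref{e3}), (\ref{e8}), (\ref{e101}) and the injectivity of $N|_{D^\perp}$.
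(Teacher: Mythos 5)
Your reduction of the statement to the single identity $\overline{g}([X,Y],W)=0$ for $X,Y\in\Gamma(D^{\theta})$, $W\in\Gamma(D^{\perp})$ is correct and is exactly what the paper aims at, but the apparatus you build around it is circular, and the one step that would actually carry the proof is never supplied. You yourself establish $\overline{g}(N[X,Y],NW)=p\,\overline{g}(J[X,Y],W)+q\,\overline{g}([X,Y],W)=q\,\overline{g}([X,Y],W)$; consequently the injectivity of $N|_{D^{\perp}}$ and the decomposition (\ref{e101}) buy nothing, since testing $N[X,Y]$ against $N(D^{\perp})$ is literally the same as testing $[X,Y]$ against $D^{\perp}$. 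The final computation, which you only sketch (``I expect\dots''), is where all the content lives, and if you carry it out along the lines you propose it closes back on itself: expanding $q\,\overline{g}(\overline{\nabla}_{X}Y,W)=\overline{g}(\overline{\nabla}_{X}(JY),NW)-p\,\overline{g}(\overline{\nabla}_{X}Y,NW)$ via Gauss--Weingarten and $(\overline{\nabla}_{X}N)Y=nh(X,Y)-h(X,TY)$, the terms symmetric in $X\leftrightarrow Y$ do cancel in the bracket, but what survives is precisely $q\,\overline{g}([X,Y],W)=\overline{g}(N[X,Y],NW)$ --- the identity you already had. No amount of reshuffling the formal ingredients you list (metric compatibility, torsion-freeness, $\overline{\nabla}J=0$, $TW=0$, symmetry of $h$ and $A$) can break this loop, because none of them uses the slantness of $D^{\theta}$; they hold for any orthogonal complement of an anti-invariant distribution.

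The missing idea is the one the paper uses at its last line: formula (\ref{e27}), $\overline{g}(NU,NV)=\sin^{2}\theta\,[p\,\overline{g}(TP_{1}U,P_{1}V)+q\,\overline{g}(P_{1}U,P_{1}V)]$, applied with $U=[X,Y]$ and $V=Z\in\Gamma(D^{\perp})$, so that $P_{1}Z=0$ forces $\overline{g}(N[X,Y],NZ)=0$ and hence $q\,\overline{g}([X,Y],Z)=0$. That is the only place the slant angle enters, and it is exactly the ingredient absent from your proposal. Be aware, however, that this step is itself delicate: Proposition \ref{p11} derives (\ref{e27}) from $\overline{g}(TP_{1}X,TP_{1}Y)=\overline{g}(JP_{1}X,JP_{1}Y)-\overline{g}(NX,NY)$, which discards the $N(D^{\perp})$-components; the unrestricted identity carries an extra term $q\,\overline{g}(P_{2}U,P_{2}V)$ (note that (\ref{e27}) as printed would give $\overline{g}(NZ,NZ)=0$ for $Z\in\Gamma(D^{\perp})$, contradicting your own computation $\overline{g}(NZ,NZ)=q\|Z\|^{2}$). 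With that term restored, the paper's final equation also collapses to a tautology. So your instinct that the last computation is ``the main obstacle'' is right, but a complete proof needs a genuine argument for $\overline{g}(N[X,Y],NZ)=0$ that goes beyond both your formal manipulations and a literal application of (\ref{e27}).
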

\begin{proof}
By using (\ref{e3}), we have
$\overline{g}(\overline{\nabla}_{X}Y,Z) = \frac{1}{q} [\overline{g}(J\overline{\nabla}_{X}Y,JZ) - p \overline{g}(\overline{\nabla}_{X}Y,JZ)],$
for any $X,Y \in \Gamma(D^{\theta})$, $Z \in \Gamma(D^{\perp})$.

From $\overline{\nabla}J=0$ we get $J \overline{\nabla}_{X}Y = \overline{\nabla}_{X}JY$ and using $JZ=NZ$, for any $Z \in \Gamma(D^{\perp})$, we obtain
$q \overline{g}(\overline{\nabla}_{X}Y,Z) = \overline{g}(\overline{\nabla}_{X}JY,NZ) - p \overline{g}(\overline{\nabla}_{X}Y,NZ).$
Thus, from (\ref{e11}) and (\ref{e12}) we get
$q\overline{g}(\overline{\nabla}_{X}Y,Z) =\overline{g}(h(X,TY),NZ)+\overline{g}(\nabla_{X}^{\perp}NY,NZ) - p \overline{g}(h(X,Y),NZ).$
From (\ref{e13})(ii) and (\ref{e17})(ii) we obtain
$
\nabla_{X}^{\perp}NY =n h(X,Y) - h(X,TY) + N \nabla_{X}Y,
$
for any $X,Y \in \Gamma(D^{\theta})$.
 Thus, we get
 $$q\overline{g}(\overline{\nabla}_{X}Y,Z) =\overline{g}(nh(X,Y),NZ)+\overline{g}(N\nabla_{X}Y,NZ) - p \overline{g}(h(X,Y),NZ),$$
 which implies
$$q \overline{g}([X,Y],Z)= \overline{g}(N \nabla_{X}Y,NZ) - \overline{g}(N \nabla_{Y}X,NZ)=\overline{g}(N[X,Y],NZ),$$ 
for any $X,Y \in \Gamma(D^{\theta})$ and $Z \in \Gamma(D^{\perp})$.
Thus, from (\ref{e27}) and (\ref{e7})(i) we have
$$q \overline{g}([X,Y],Z)= \sin^{2} \theta [p \overline{g}(P1[X,Y],TP_{1}Z)+ q \overline{g}(P1[X,Y],P_{1}Z)].$$ 
By using $P_{1}Z=0$ for any $Z \in \Gamma(D^{\perp})$ (where $P_{1}Z$ is the projection of $Z$ on $\Gamma(D^{\theta})$), we obtain $\overline{g}([X,Y],Z)=0$, for any $X,Y \in \Gamma(D^{\theta})$, $Z \in \Gamma(D^{\perp})$ which implies that $[X,Y] \in \Gamma(D^{\theta})$.
\end{proof}

\begin{prop}
Let $M$ be a hemi-slant submanifold in a locally metallic (or locally Golden) Riemannian manifold $(\overline{M},\overline{g},J)$. Then the distribution $D^{\perp}$ is integrable if and only if
\begin{equation}\label{e31}
A_{NZ}W=0,
\end{equation}
 for any $Z,W\in \Gamma(D^{\perp})$.
\end{prop}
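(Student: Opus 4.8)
The plan is to reduce the integrability of $D^{\perp}$ to a condition on the tensor $T$, and then to identify that condition with the vanishing of the mixed shape operator. First I would observe that $TX=TP_{1}X$ for every $X\in\Gamma(TM)$ (recall $TP_{2}X=0$ by (\ref{e24})(ii)), that $TP_{1}$ maps $\Gamma(D^{\theta})$ into itself (by (\ref{e24})(iii)), and that, by (\ref{e28}), it satisfies $(TP_{1})^{2}=\cos^{2}\theta(pTP_{1}+qI)$ on $\Gamma(D^{\theta})$; since $q\neq 0$, this forces $TP_{1}$ to be injective on $\Gamma(D^{\theta})$ whenever $\theta\neq\frac{\pi}{2}$. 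Hence, for $X\in\Gamma(TM)$, one has $TX=0$ if and only if $P_{1}X=0$, i.e. $X\in\Gamma(D^{\perp})$. Applying this with $X=[Z,W]$ shows that $D^{\perp}$ is integrable if and only if $T([Z,W])=0$ for all $Z,W\in\Gamma(D^{\perp})$.

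Next I would compute $T([Z,W])$. Since $D^{\perp}$ is anti-invariant we have $TZ=TW=0$, so formula (\ref{e20}) collapses to
$$T([Z,W])=A_{NZ}W-A_{NW}Z,$$
for all $Z,W\in\Gamma(D^{\perp})$. The key remaining ingredient is the antisymmetry $A_{NZ}W=-A_{NW}Z$. To prove it, take $X\in\Gamma(TM)$ arbitrary; using (\ref{e12}) and the symmetry of $h$, the Gauss and Weingarten formulas (\ref{e11}), the relations $JZ=NZ$ and $JW=NW$ (valid since $Z,W\in\Gamma(D^{\perp})$), the $J$-compatibility (\ref{e2}) and $\overline{\nabla}J=0$, one obtains
\begin{align*}
\overline{g}(A_{NZ}W,X)&=\overline{g}(h(X,W),NZ)=\overline{g}(\overline{\nabla}_{X}W,JZ)=\overline{g}(J\overline{\nabla}_{X}W,Z)\\
&=\overline{g}(\overline{\nabla}_{X}NW,Z)=-\overline{g}(A_{NW}X,Z)=-\overline{g}(A_{NW}Z,X).
\end{align*}
As this holds for every $X\in\Gamma(TM)$, it gives $A_{NZ}W=-A_{NW}Z$, whence $T([Z,W])=2A_{NZ}W$.

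Putting the two steps together: if $A_{NZ}W=0$ for all $Z,W\in\Gamma(D^{\perp})$, then $T([Z,W])=0$, so $[Z,W]\in\Gamma(D^{\perp})$ and $D^{\perp}$ is integrable; conversely, if $D^{\perp}$ is integrable then $T([Z,W])=0$, hence $2A_{NZ}W=0$, which is exactly (\ref{e31}). The step I expect to be the crux is the antisymmetry identity $A_{NZ}W=-A_{NW}Z$: without it one only recovers the weaker criterion $A_{NZ}W=A_{NW}Z$, and it is precisely here that the symmetric form of (\ref{e2}) (as opposed to the skew-symmetry of an almost Hermitian metric) is exploited to reach the sharper condition (\ref{e31}). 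One should also keep in mind the degenerate case $\theta=\frac{\pi}{2}$, where $T$ vanishes on all of $TM$ and $TP_{1}$ is no longer injective, so the statement is to be read for $\theta\neq\frac{\pi}{2}$.
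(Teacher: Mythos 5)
Your proof is correct and follows essentially the same route as the paper's: your antisymmetry identity $A_{NZ}W=-A_{NW}Z$ is precisely the paper's relation $\overline{g}(A_{NZ}X,W)=-\overline{g}(th(X,Z),W)$ (there obtained from (\ref{e17})(i), which packages the same use of $\overline{\nabla}J=0$ and the Gauss--Weingarten formulas), and both arguments then combine it with (\ref{e20}) and the invertibility of $T$ on $D^{\theta}$ to conclude. The only organizational difference is that for the converse you invoke injectivity of $T|_{D^{\theta}}$ (so that $T([Z,W])=0$ forces $[Z,W]\in\Gamma(D^{\perp})$) where the paper uses surjectivity $T(D^{\theta})=D^{\theta}$; your explicit caveat about the degenerate case $\theta=\frac{\pi}{2}$ is a point the paper leaves implicit.
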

\begin{proof}
If $M$ is a hemi-slant submanifold in a locally metallic (or locally Golden) Riemannian manifold $(\overline{M},\overline{g},J)$ then, for any $Z,W \in \Gamma(D^{\perp})$ we have $TZ=TW=0$ which implies $\nabla_{Z}TW=\nabla_{W}ZX=0$. By using (\ref{e24})(ii) and (\ref{e20}) we get $T([Z,W])=0$ if and only if $A_{NZ}W=A_{NW}Z$ holds, for any $Z,W \in \Gamma(D^{\perp})$.

From (\ref{e17})(i), for any $X\in \Gamma(TM)$ and $Z,W \in \Gamma(D^{\perp})$, we get
$$ \overline{g}(A_{NZ}X,W)+\overline{g}(th(X,Z),W)=\overline{g}((\nabla_{X}T)Z,W)=-\overline{g}(\nabla_{X}Z,TW)=0,$$
which implies $ \overline{g}(A_{NZ}X,W) = -\overline{g}(th(X,Z),W)$.
From 
$$\overline{g}(A_{NZ}X,W)=\overline{g}(A_{NZ}W,X)=\overline{g}(A_{NW}Z,X)=\overline{g}(h(X,Z),NW)=\overline{g}(th(X,Z),W),$$ we obtain $\overline{g}(A_{NZ}W,X)=0$ for any $X\in \Gamma(TM)$ and $Z,W \in \Gamma(D^{\perp})$. Thus, $A_{NZ}W=0$, for any $Z,W \in \Gamma(D^{\perp})$.

Conversely, if $A_{NZ}W=0$, for any $Z,W \in \Gamma(D^{\perp})$ then, from $\overline{g}(th(X,Z),W)=\overline{g}(h(X,Z),NW)=\overline{g}(A_{NW}Z,X)=0$ and (\ref{e17})(i), we get
$$0=\overline{g}((\nabla_{Z}T)W,X)=\overline{g}(T\nabla_{Z}W,X)=\overline{g}(\nabla_{Z}W,TX),$$ for any $Z,W \in \Gamma(D^{\perp})$, $X \in \Gamma(D^{\theta})$. From $T(D^{\theta})=D^{\theta}$, we obtain $\nabla_{Z}W \in \Gamma(D^{\perp})$ which implies $[Z,W] \in \Gamma(D^{\perp})$.
\end{proof}

\begin{prop}
Let $M$ be a hemi-slant submanifold in a locally metallic (or locally Golden) Riemannian manifold $(\overline{M},\overline{g},J)$. Then, the anti-invariant distribution  $D^{\perp}$ is integrable if and only if
\begin{equation}\label{e32}
(\nabla_{Z}T)W=(\nabla_{W}T)Z
\end{equation}
 for any $Z,W \in \Gamma(D^{\perp})$.
\end{prop}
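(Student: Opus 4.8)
The plan is to show that the symmetry condition $(\nabla_{Z}T)W = (\nabla_{W}T)Z$ is equivalent to $T([Z,W]) = 0$, and that the latter is precisely the integrability of $D^{\perp}$.

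First, for $Z, W \in \Gamma(D^{\perp})$ we have $TZ = TW = 0$ by (\ref{e24})(ii), so (\ref{e13})(i) reduces to $(\nabla_{Z}T)W = -T(\nabla_{Z}W)$ and $(\nabla_{W}T)Z = -T(\nabla_{W}Z)$. Subtracting these and using that $\nabla$ is torsion-free, i.e. $\nabla_{Z}W - \nabla_{W}Z = [Z,W]$, we get
$$(\nabla_{Z}T)W - (\nabla_{W}T)Z = -T([Z,W]),$$
so (\ref{e32}) holds for all $Z, W \in \Gamma(D^{\perp})$ if and only if $T([Z,W]) = 0$ for all such $Z, W$. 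The same identity drops out of (\ref{e20}) once $TZ = TW = 0$ is inserted, which moreover gives $T([Z,W]) = A_{NZ}W - A_{NW}Z$; this is the route that links the statement directly to the preceding Proposition, via its already-proved equivalence $A_{NZ}W = A_{NW}Z \Leftrightarrow A_{NZ}W = 0$.

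Next I would show $T([Z,W]) = 0 \iff [Z,W] \in \Gamma(D^{\perp})$, i.e. that $\ker T = D^{\perp}$ fibrewise. Decomposing $[Z,W] = P_{1}[Z,W] + P_{2}[Z,W]$ with $P_{1}[Z,W] \in \Gamma(D^{\theta})$ and $P_{2}[Z,W] \in \Gamma(D^{\perp})$, and using $T(D^{\perp}) = \{0\}$ (the Remark asserting $T(D^{\theta}) = D^{\theta}$, $T(D^{\perp}) = \{0\}$), we obtain $T([Z,W]) = T(P_{1}[Z,W])$. Hence the only substantive point is the injectivity of $T$ on $\Gamma(D^{\theta})$, which is already contained in the proof of that Remark: from (\ref{e28}) one has $X = \frac{1}{q}T(TX - p\cos^{2}\theta\, X)$ for every $X \in \Gamma(D^{\theta})$ (and $X = T(TX - \cos^{2}\theta\, X)$ in the Golden case), so $TX = 0$ forces $X = -\frac{p\cos^{2}\theta}{q}TX = 0$. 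Therefore $T(P_{1}[Z,W]) = 0$ if and only if $P_{1}[Z,W] = 0$, and chaining the equivalences: $D^{\perp}$ is integrable $\iff P_{1}[Z,W] = 0$ for all $Z, W \in \Gamma(D^{\perp}) \iff T([Z,W]) = 0$ for all such $Z,W \iff$ (\ref{e32}) holds.

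I do not expect a real obstacle: the argument is essentially bookkeeping with the projections $P_{1}, P_{2}$ together with one elementary consequence of (\ref{e28}). The only places that call for a little care are checking that the injectivity of $T|_{D^{\theta}}$ extracted from (\ref{e28}) is valid in every case (including $\cos\theta = 0$, where the displayed identity simply reads $X = 0$), and making sure the torsion-free identity $\nabla_{Z}W - \nabla_{W}Z = [Z,W]$ is legitimately used for the induced Levi-Civita connection $\nabla$ on $M$.
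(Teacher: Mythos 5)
Your argument is correct where the proposition is meaningful, and it takes a genuinely different route from the paper's. The paper's proof is a two-line reduction: from (\ref{e17})(i) it gets $(\nabla_{Z}T)W-(\nabla_{W}T)Z=A_{NW}Z-A_{NZ}W$ (the $th(Z,W)$ terms cancelling by symmetry of $h$) and then invokes the preceding proposition, which characterizes integrability of $D^{\perp}$ by $A_{NZ}W=0$; all the real work (that the symmetry $A_{NZ}W=A_{NW}Z$ forces $A_{NZ}W=0$, and that this in turn forces $\nabla_{Z}W\in\Gamma(D^{\perp})$) is outsourced to that earlier proof. You instead derive $(\nabla_{Z}T)W-(\nabla_{W}T)Z=-T([Z,W])$ directly from (\ref{e13})(i) and torsion-freeness of $\nabla$ -- the same identity that (\ref{e20}) yields, as you note -- and close the loop with the purely algebraic observation that $\ker T=D^{\perp}$, via the injectivity of $T$ on $D^{\theta}$ extracted from (\ref{e28}). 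This is self-contained, avoids the shape operator and the second fundamental form entirely, and makes transparent that the whole content of the equivalence is $T([X,Y])=0\Leftrightarrow[X,Y]\in\Gamma(D^{\perp})$; the paper's route, by contrast, delivers the extrinsic condition $A_{NZ}W=0$ as a by-product.

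The one point to fix is your parenthetical claim that the injectivity of $T|_{D^{\theta}}$ survives the case $\cos\theta=0$. The identity $X=\frac{1}{q}T(TX-p\cos^{2}\theta\,X)$ you quote is misstated in the paper: what (\ref{e28}) actually gives is $q\cos^{2}\theta\,X=T(TX-p\cos^{2}\theta\,X)$, which for $\theta=\frac{\pi}{2}$ reads $0=0$ and yields nothing. Indeed, by (\ref{e26}) one has $\|TP_{1}X\|^{2}=\cos^{2}\theta\,[\,\cdots\,]$, so for $\theta=\frac{\pi}{2}$ the operator $T$ vanishes on all of $D^{\theta}$, injectivity fails, and (\ref{e32}) holds vacuously while nothing forces $[Z,W]$ into $D^{\perp}$. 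Your proof therefore requires $\theta\neq\frac{\pi}{2}$ and you should say so explicitly; this is the same hypothesis the paper hides inside its appeal to $T(D^{\theta})=D^{\theta}$ (and, through (\ref{e31}), inside the converse of the preceding proposition), so it is a defect shared with the original rather than a flaw specific to your approach.
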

\begin{proof}
By using (\ref{e17}) we get $(\nabla_{Z}T)W - (\nabla_{W}T)Z = A_{NW}Z - A_{NZ}W$,  for any $Z,W \in \Gamma(D^{\perp})$ and using (\ref{e31}) we obtain the conclusion.
\end{proof}

\begin{remark}
Let $M$ be a hemi-slant submanifold in a locally metallic (or locally Golden) Riemannian manifold ($\overline{M},\overline{g},J)$. If $(\nabla_{Z}T)W =0$, for any $Z,W \in \Gamma(D^{\perp})$, then $D^{\perp}$ is integrable.
\end{remark}

\begin{prop}
Let $M$ be a hemi-slant submanifold in a locally metallic (or locally Golden) Riemannian manifold ($\overline{M},\overline{g},J)$. If $(\overline{\nabla}_{X}N)Y =0$, for any $X,Y \in \Gamma(D^{\theta})$ then, either $M$ is a $D^{\theta}$ geodesic submanifold (i.e $h(X,Y)=0$) or $h(X,Y)$ is an eigenvector of $n$, with eigenvalues
\begin{equation}\label{e35}
\lambda_{1}=\frac{p \cos^{2}\theta + \cos \theta \sqrt{p^{2}\cos^{2}\theta + 4q}}{2}, \quad \lambda_{2}=\frac{p \cos^{2}\theta - \cos \theta \sqrt{p^{2}\cos^{2}\theta + 4q}}{2}.
\end{equation}
\end{prop}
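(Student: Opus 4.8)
The plan is to turn the vanishing of $(\overline{\nabla}_{X}N)Y$ into a quadratic identity for $h(X,Y)$ with respect to the normal endomorphism $n$, and then to read off its eigenvalues as the roots of that quadratic. Throughout one works pointwise on $D^{\theta}$.

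First I would use (\ref{e17})(ii), namely $(\overline{\nabla}_{X}N)Y=nh(X,Y)-h(X,TY)$, so that the hypothesis is equivalent to
\begin{equation*}
nh(X,Y)=h(X,TY),\qquad X,Y\in\Gamma(D^{\theta}).
\end{equation*}
Since $T(D^{\theta})=D^{\theta}$ (equivalently $TP_{1}X\in\Gamma(D^{\theta})$ by (\ref{e24})(iii)), the field $TY$ again lies in $\Gamma(D^{\theta})$, so this identity may be applied with $TY$ in place of $Y$, giving $nh(X,TY)=h(X,T^{2}Y)$ and hence $n^{2}h(X,Y)=n\bigl(h(X,TY)\bigr)=h(X,T^{2}Y)$.

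Next I would substitute the slant identity (\ref{e28}) in the form $T^{2}Y=\cos^{2}\theta\,(pTY+qY)$, which is valid for $Y\in\Gamma(D^{\theta})$ because then $P_{1}Y=Y$. Combining this with the displayed identity (used twice) yields
\begin{equation*}
n^{2}h(X,Y)=\cos^{2}\theta\bigl(p\,h(X,TY)+q\,h(X,Y)\bigr)=\cos^{2}\theta\bigl(p\,nh(X,Y)+q\,h(X,Y)\bigr).
\end{equation*}
Writing $w:=h(X,Y)$, this is exactly $n^{2}w-p\cos^{2}\theta\,nw-q\cos^{2}\theta\,w=0$, i.e.\ $(n-\lambda_{1}I)(n-\lambda_{2}I)w=0$, where $\lambda_{1},\lambda_{2}$ are the two roots of $t^{2}-p\cos^{2}\theta\,t-q\cos^{2}\theta=0$, namely the numbers in (\ref{e35}); here $\cos\theta\ge0$ (as $\theta\in[0,\frac{\pi}{2}]$) is what lets one write $\sqrt{\cos^{2}\theta(p^{2}\cos^{2}\theta+4q)}=\cos\theta\sqrt{p^{2}\cos^{2}\theta+4q}$.

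Finally, since $n$ is $\overline{g}$-symmetric by (\ref{e7})(ii), it is pointwise diagonalizable with real eigenvalues, so the annihilating relation forces $w=h(X,Y)$ to lie in the sum of the $\lambda_{1}$- and $\lambda_{2}$-eigenspaces of $n$: either $h(X,Y)=0$ identically on $D^{\theta}$, so that $M$ is $D^{\theta}$-geodesic, or $h(X,Y)$ is an eigenvector of $n$ with eigenvalue $\lambda_{1}$ or $\lambda_{2}$. The first three steps are routine once (\ref{e17})(ii), (\ref{e24})(iii) and (\ref{e28}) are in hand; the only delicate point is this last step, where extracting the eigenvector conclusion from the quadratic annihilating polynomial uses the self-adjointness of $n$ (and, strictly, gives membership in the direct sum of the two eigenspaces).
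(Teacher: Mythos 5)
Your proof is correct and follows essentially the same route as the paper's: both use (\ref{e17})(ii) to get $nh(X,Y)=h(X,TY)$, iterate (using $T(D^{\theta})=D^{\theta}$) together with (\ref{e28}) to obtain $n^{2}h(X,Y)=p\cos^{2}\theta\, nh(X,Y)+q\cos^{2}\theta\, h(X,Y)$, and read off the eigenvalues from the resulting quadratic. Your extra care at the two points the paper glosses over --- justifying that $TY$ remains in $\Gamma(D^{\theta})$, and noting that the annihilating polynomial strictly only places $h(X,Y)$ in the sum of the two eigenspaces of the self-adjoint operator $n$ --- tightens the argument but does not change the approach.
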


\begin{proof}
By using $(\overline{\nabla}_{X}N)Y =0$ for any $X,Y \in \Gamma(D^{\theta})$ and (\ref{e17})(ii) we obtain $nh(X,Y)=h(X,TY)$. From (\ref{e28}) we get, for any $X,Y \in \Gamma(D^{\theta})$:
$$ n^{2}h(X,Y)=h(X,T^{2}Y)=p\cos^{2}\theta nh(X,Y) + q\cos^{2}\theta h(X,Y).$$ Thus, we obtain either $M$ is a $D^{\theta}$ geodesic submanifold or $h(X,Y)$ is an eigenvector of $n$ with eigenvalue $\lambda$, which verifies the equation $\lambda^{2}-p\cos^{2}\theta \lambda - q\cos^{2}\theta =0$ and (\ref{e35}) holds.
\end{proof}

\section{Mixed totally geodesic hemi-slant submanifolds}
In the next propositions, we consider hemi-slant submanifolds in a locally metallic (or locally Golden) Riemannian manifold and we find some conditions for these submanifolds to be $D^{\theta} - D^{\perp}$ mixed totally geodesic (i.e. $h(X,Y)=0$, for any $X \in \Gamma(D^{\theta})$ and $Y \in \Gamma(D^{\perp})$).

\begin{prop}
If $M$ is a hemi-slant submanifold in a locally metallic (or locally Golden) Riemannian manifold $(\overline{M},\overline{g},J)$, then $M$ is a $D^{\theta}-D^{\perp}$ mixed totally geodesic submanifold if and only if $A_{V}X \in \Gamma(D^{\theta})$ and  $A_{V}Y \in \Gamma(D^{\perp})$, for any $X \in \Gamma(D^{\theta})$, $Y \in \Gamma(D^{\perp})$ and $V \in \Gamma(T^{\bot}M)$.
\end{prop}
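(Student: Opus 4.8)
The plan is to prove both implications by unwinding the definition of mixed totally geodesic via the relation \eqref{e12} between the second fundamental form $h$ and the shape operator $A$, together with the decomposition $TM = D^{\theta}\oplus D^{\perp}$ and the fact (established in the Remark before the last Proposition) that $T(D^{\theta})=D^{\theta}$ and $T(D^{\perp})=\{0\}$. The key observation is that $M$ is $D^{\theta}$–$D^{\perp}$ mixed totally geodesic means $h(X,Y)=0$ for all $X\in\Gamma(D^{\theta})$, $Y\in\Gamma(D^{\perp})$, and by \eqref{e12} this is equivalent to $\overline{g}(A_V X, Y)=0$ for all such $X$, $Y$ and all $V\in\Gamma(T^{\perp}M)$.

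First I would assume $M$ is mixed totally geodesic. Fix $V\in\Gamma(T^{\perp}M)$ and $X\in\Gamma(D^{\theta})$. For any $Y\in\Gamma(D^{\perp})$ we get $\overline{g}(A_V X, Y)=\overline{g}(h(X,Y),V)=0$, so $A_V X$ is orthogonal to $D^{\perp}$; since $A_V X\in\Gamma(TM)=\Gamma(D^{\theta}\oplus D^{\perp})$, this forces $A_V X\in\Gamma(D^{\theta})$. Symmetrically, for $Y\in\Gamma(D^{\perp})$ and any $X\in\Gamma(D^{\theta})$, $\overline{g}(A_V Y, X)=\overline{g}(h(X,Y),V)=0$ (using symmetry of $h$ and of $A_V$), so $A_V Y$ is orthogonal to $D^{\theta}$, hence $A_V Y\in\Gamma(D^{\perp})$. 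This gives the forward direction.

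Conversely, assume $A_V X\in\Gamma(D^{\theta})$ and $A_V Y\in\Gamma(D^{\perp})$ for all $X\in\Gamma(D^{\theta})$, $Y\in\Gamma(D^{\perp})$, $V\in\Gamma(T^{\perp}M)$. Then for such $X$, $Y$, $V$ we have $\overline{g}(h(X,Y),V)=\overline{g}(A_V X, Y)=0$ because $A_V X\in\Gamma(D^{\theta})$ is orthogonal to $Y\in\Gamma(D^{\perp})$. Since $V\in\Gamma(T^{\perp}M)$ was arbitrary and $h(X,Y)\in\Gamma(T^{\perp}M)$, we conclude $h(X,Y)=0$, i.e. $M$ is $D^{\theta}$–$D^{\perp}$ mixed totally geodesic. (Note only one of the two hypotheses on $A_V$ is strictly needed for this direction, but both are natural to state.)

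There is no real obstacle here: the argument is a direct application of \eqref{e12} and the orthogonal splitting of $TM$, with the symmetry of $A_V$ and $h$ used to pass between the two slots. The only point requiring a prior structural fact is the implicit use that $A_V$ preserves $\Gamma(TM)$ (which is immediate from the Weingarten formula \eqref{e11}(ii)) and that the decomposition $TM=D^{\theta}\oplus D^{\perp}$ is orthogonal, so that "orthogonal to $D^{\perp}$" inside $TM$ is the same as "lying in $D^{\theta}$."
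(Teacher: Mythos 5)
Your proposal is correct and follows essentially the same route as the paper: both directions rest on the identity $\overline{g}(A_{V}X,Y)=\overline{g}(A_{V}Y,X)=\overline{g}(h(X,Y),V)$ combined with the orthogonality of the splitting $TM=D^{\theta}\oplus D^{\perp}$. You simply spell out in more detail the equivalences that the paper's one-line proof leaves implicit.
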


\begin{proof}
From $\overline{g}(A_{V}X,Y)=\overline{g}(A_{V}Y,X)=\overline{g}(h(X,Y),V)$, for any $X \in \Gamma(D^{\theta}), Y \in \Gamma(D^{\perp})$ and $V \in \Gamma(T^{\bot}M)$ we obtain that $M$ is a $D^{\theta}-D^{\perp}$ mixed totally geodesic submanifold in the locally metallic (or locally Golden) Riemannian manifold if and only if $A_{V}X \in \Gamma(D^{\theta})$ and  $A_{V}Y \in \Gamma(D^{\perp})$, for any $X \in \Gamma(D^{\theta})$, $Y \in \Gamma(D^{\perp})$ and $V \in \Gamma(T^{\bot}M)$.
\end{proof}

\begin{prop}
Let $M$ be a proper hemi-slant submanifold in a locally metallic (or locally Golden) Riemannian manifold ($\overline{M},\overline{g},J)$. If $(\overline{\nabla}_{X}N)Z =0$, for any $X\in \Gamma(TM)$ and $Z \in \Gamma(D^{\perp})$, then $M$ is a $D^{\theta}-D^{\perp}$ mixed totally geodesic submanifold in $\overline{M}$ .
\end{prop}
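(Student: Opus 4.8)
The plan is to start from the hypothesis $(\overline{\nabla}_{X}N)Z=0$ for all $X\in\Gamma(TM)$ and $Z\in\Gamma(D^{\perp})$ and feed it into formula (\ref{e17})(ii), which gives $nh(X,Z)=h(X,TZ)$. Since $Z\in\Gamma(D^{\perp})$ we have $TZ=0$ by (\ref{e24})(ii), so this collapses to $nh(X,Z)=0$ for every $X\in\Gamma(TM)$ and $Z\in\Gamma(D^{\perp})$. The goal is to upgrade "$h(X,Z)\in\ker n$" to "$h(X,Z)=0$" when $X\in\Gamma(D^{\theta})$.

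Next I would identify where $h(X,Z)$ lives inside $T^{\perp}M$. Using the decomposition (\ref{e101}), $T^{\perp}M=N(D^{\theta})\oplus N(D^{\perp})\oplus\mu$. On $\mu$ the structure $n$ essentially behaves like a metallic structure (it has no kernel there, by the relation (\ref{e100})(i) together with $N\circ t=0$ on $\mu$), and on $N(D^{\perp})$ the map $n$ is again injective since $J$ restricted to an anti-invariant part composed appropriately is nondegenerate — more precisely, for $Z\in\Gamma(D^{\perp})$, $\overline{g}(nNZ,nNZ)=\sin^{2}\theta\,[\,\cdots\,]$ type identities force $N(D^{\perp})\cap\ker n$ to be trivial on the proper part. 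So I would argue that $\ker n$ can only meet $N(D^{\theta})$, i.e. $nh(X,Z)=0$ forces $h(X,Z)\in\Gamma(N(D^{\theta}))$. Then write $h(X,Z)=NW$ for some $W\in\Gamma(D^{\theta})$ and test against an arbitrary $NX'$ with $X'\in\Gamma(D^{\theta})$: by (\ref{e27}), $\overline{g}(NW,NX')=\sin^{2}\theta[p\,\overline{g}(TW,X')+q\,\overline{g}(W,X')]$, and properness gives $\sin\theta\neq 0$, so the bracket is a nondegenerate bilinear form on $D^{\theta}$ (it is $\overline{g}(JW,JX')$ up to the factor, hence positive definite), forcing $W=0$ and thus $h(X,Z)=0$.

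Alternatively — and this is probably the cleaner route to write down — I would avoid the subbundle analysis and argue directly with inner products. For $X\in\Gamma(D^{\theta})$, $Z\in\Gamma(D^{\perp})$, compute $\overline{g}(nh(X,Z),NX')$ for arbitrary $X'\in\Gamma(D^{\theta})$: this is $0$ by the hypothesis. But by (\ref{e7})(ii) this equals $\overline{g}(h(X,Z),nNX')$, and using (\ref{e99})(ii), $nNX'=pNX'-NTX'$, so $0=\overline{g}(h(X,Z),pNX'-NTX')=p\,\overline{g}(h(X,Z),NX')-\overline{g}(h(X,Z),NTX')$. Since $T$ preserves $D^{\theta}$ and $(TP_{1})^{2}=\cos^{2}\theta(pTP_{1}+qI)$ on $D^{\theta}$ by (\ref{e28}), iterating this recursion lets me express everything in terms of $\overline{g}(h(X,Z),NX')$ alone; the consistency of the recursion, combined with $\cos^{2}\theta<1$ (properness, $\theta\neq 0$), yields $\overline{g}(h(X,Z),NX')=0$ for all $X'\in\Gamma(D^{\theta})$. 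Separately, $\overline{g}(h(X,Z),NW')=0$ for $W'\in\Gamma(D^{\perp})$ follows because $nh(X,Z)=0$ and $NW'\in\ker(n-\text{something})$; in fact $\overline{g}(h(X,Z),NW')=\overline{g}(h(X,Z),JW')=\overline{g}(Jh(X,Z),W')$ and one extracts the tangential/normal parts. Finally $\overline{g}(h(X,Z),\xi)=0$ for $\xi\in\Gamma(\mu)$: since $nh(X,Z)=0$ and $n$ is invertible on $\mu$, writing $h(X,Z)=(\text{part in }N(D^{\theta})\oplus N(D^{\perp}))+\xi_{0}$ and applying $n$ shows $\xi_{0}=0$. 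Combining the three, $h(X,Z)=0$, hence $M$ is $D^{\theta}$–$D^{\perp}$ mixed totally geodesic.

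The main obstacle I anticipate is making the kernel/injectivity claim for $n$ precise on the three summands of $T^{\perp}M$ — in particular showing that the bilinear form $p\,\overline{g}(TW,X')+q\,\overline{g}(W,X')$ is positive definite on $D^{\theta}$ (it equals $\overline{g}(JW,JX')$ by (\ref{e3}), so positivity is immediate, but one must be careful that $JW$ still has a tangential component, which is exactly $TW$, all consistent with $D^{\theta}$ being slant and $\theta\neq\pi/2$). Once that nondegeneracy is in hand the rest is bookkeeping with (\ref{e17})(ii), (\ref{e24})(ii), (\ref{e27}) and (\ref{e99})(ii); the properness hypothesis $\theta\in(0,\frac{\pi}{2})$ is used precisely to guarantee both $\sin\theta\neq 0$ and $\cos^{2}\theta\neq 1$, which are what make the relevant operators invertible.
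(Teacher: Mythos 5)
Your opening move coincides with the paper's: substituting the hypothesis into (\ref{e17})(ii) and using $TZ=0$ yields $nh(X,Z)=0$. From there you diverge, and both of your routes hit the same concrete obstruction. In your ``cleaner'' route, the recursion $p\,\overline{g}(h(X,Z),NX')=\overline{g}(h(X,Z),NTX')$ combined with $(TP_{1})^{2}=\cos^{2}\theta\,(pTP_{1}+qI)$ gives, after one iteration, $\bigl[p^{2}-(p^{2}+q)\cos^{2}\theta\bigr]\,\overline{g}(h(X,Z),NX')=0$. You assert that properness ($\cos^{2}\theta<1$) closes this, but the offending value is $\cos^{2}\theta=p^{2}/(p^{2}+q)$, which lies strictly between $0$ and $1$ and is therefore not excluded; in the Golden case $p=q=1$ this is $\theta=\pi/4$, a perfectly proper slant angle. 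At that angle $p$ is an admissible eigenvalue of the symmetric operator $T|_{D^{\theta}}$ (the eigenvalue equation is $\lambda^{2}=\cos^{2}\theta(p\lambda+q)$), and the functional $X'\mapsto\overline{g}(h(X,Z),NX')$, which is represented by $P_{1}th(X,Z)$, can be supported on that eigenspace, so the recursion is consistent with a nonzero value. Your first route has the same hole in different clothing: by (\ref{e99})(ii), $n$ acts on $N(D^{\theta})$ as $NW\mapsto N(pW-TW)$, so its kernel there is $N$ of the $p$-eigenspace of $T|_{D^{\theta}}$, nontrivial exactly at that angle; moreover the final step (``test against $NX'$ \dots forcing $W=0$'') presupposes $\overline{g}(NW,NX')=0$ for all $X'$, which you never establish --- positive definiteness of the form is of no use until you know the pairing vanishes. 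Your treatment of the other two components is fine: $n$ acts as $p\,\mathrm{id}$ on $N(D^{\perp})$ and satisfies $n^{2}=pn+qI$ on $\mu$, hence is invertible on both, so $h(X,Z)\perp N(D^{\perp})\oplus\mu$ does follow from $nh(X,Z)=0$.

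The paper's proof is much shorter and sidesteps the decomposition entirely: it writes $0=n^{2}h(Z,X)=h(Z,T^{2}X)=p\cos^{2}\theta\,nh(Z,TX)+q\cos^{2}\theta\,h(Z,X)$ using (\ref{e28}), then kills the first term by the hypothesis (with $TX$ in the first slot) and concludes from $\theta\neq\pi/2$ and $q\neq0$. The price is that the identity $n^{2}h(Z,X)=h(Z,T^{2}X)$ uses $nh(Z,W)=h(Z,TW)$ for $W\in\Gamma(D^{\theta})$, i.e.\ the vanishing of $\overline{\nabla}N$ evaluated with \emph{tangential} second argument --- strictly more than the single relation $nh(X,Z)=0$ that you extract from the literal hypothesis. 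Working only from $nh(X,Z)=0$, as you do, the conclusion genuinely does not follow at the exceptional angle, so to repair your argument you must either import that second identity or exclude $\cos^{2}\theta=p^{2}/(p^{2}+q)$ by hypothesis.
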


\begin{proof}
If $X \in \Gamma(D^{\theta})$ and $Z \in \Gamma(D^{\perp})$ then, from $(\overline{\nabla}_{X}N)Z =0$,  (\ref{e17})(ii) and $TZ=0$ we get $h(Z,TX)=nh(X,Z)=h(X,TZ)=0$.
From (\ref{e28}), we have
$$ 0=n^{2}h(Z,X)=h(Z,T^{2}X)=p\cos^{2}\theta nh(Z,TX) + q\cos^{2}\theta h(Z,X)$$ and we obtain $q\cos^{2}\theta h(Z,X)=0$. By using $\theta \neq \frac{\pi}{2}$ and $q\neq 0$,
we get $h(X,Z)=0$, for any $X \in \Gamma(D^{\theta})$ and $Z \in \Gamma(D^{\perp})$.
\end{proof}

\linespread{1}
Cristina E. Hretcanu, \\ Stefan cel Mare University of Suceava, Romania, e-mail: criselenab@yahoo.com\\
Adara M. Blaga, \\ West University of Timisoara, Romania, e-mail: adarablaga@yahoo.com.


\begin{thebibliography}{}

\bibitem{Atceken1}  M. At\c{c}eken, \textit{Slant submanifolds of a Riemannian product manifold}, Acta Math. Sci. Ser. B Engl. Ed. \textbf{30} (2010), no. 1, 215--224. MR2658956


\bibitem{Atceken3} M. At\c{c}eken and S. Dirik, \textit{Pseudo-slant submanifolds of a nearly Kenmotsu manifold}, Serdica Math. J. \textbf{41} (2015), no. 2-3, 243--262. MR3363604

\bibitem{Atceken4} M. At\c{c}eken and S. Dirik, \textit{On the geometry of pseudo-slant submanifolds of a Kenmotsu manifold}, Gulf J. Math. \textbf{2} (2014), no. 2, 51--66.

\bibitem{Atceken5} M. At\c{c}eken and S.K. Hui, \textit{Slant and pseudo-slant submanifolds in LCS-manifolds}, Czechoslovak Math. J. \textbf{63}(138) (2013), no. 1, 177--190. MR3035505

\bibitem{Atceken6} M. At\c{c}eken, S. Dirik and U. Yildirim, \textit{Pseudo-slant submanifolds of a locally decomposable Riemannian manifold} Journal of Advances in Math. \textbf{11}(8) (2015), 5587--5599.

\bibitem{Blaga} A.M. Blaga, \textit{The geometry of Golden conjugate connections}, Sarajevo J. Math. \textbf{10}(23) (2014), no. 2, 237--245. MR3280661

\bibitem{Blaga_Hr} A.M. Blaga and C.E. Hretcanu, \textit{Invariant, anti-invariant and slant submanifolds of a metallic Riemannian manifold}, arXiv:2183617, (2018).

\bibitem{Blaga2}  A.M. Blaga and C.E. Hretcanu, \textit{Metallic conjugate connections}, Rev. Un. Mat. Argentina. \textbf{59} (2018), no. 1, 179--192.

\bibitem{Cabrerizo1} J.L. Cabrerizo, A. Carriazo, L.M. Fernandez and M. Fernandez, \textit{Slant submanifolds in Sasakian manifolds}, Glasg. Math. J. \textbf{42} (2000), no. 1, 125--138. MR1739684

\bibitem{Cabrerizo2} J.L. Cabrerizo, A. Carriazo, L.M. Fernandez and M. Fernandez, \textit{Semi-slant submanifolds of a Sasakian manifold}, Geom. Dedicata \textbf{78} (1999), no. 2, 183--199. MR1722833

\bibitem{Carriazo} A. Carriazo, \textit{New developments in slant submanifolds theory}, Narasa Publishing House New Delhi, India (2002)

\bibitem{Chen3} B.-Y. Chen, \textit{Geometry of slant submanifolds}, Katholieke Universiteit Leuven, Louvain, 1990. 123 pp. MR1099374

\bibitem{Chen4} B.-Y. Chen, \textit{Slant immersions}, Bull. Austral. Math. Soc. \textbf{41} (1990), no. 1, 135--147. MR1043974

\bibitem{CrHr} M. Crasmareanu and C.E. Hretcanu, \textit{Golden differential geometry}, Chaos Solitons Fractals \textbf{38} (2008), no. 5, 1229--1238. MR2456523

\bibitem{CrHrMu} M. Crasmareanu, C.E. Hretcanu and M.I. Munteanu, \textit{Golden- and product-shaped hypersurfaces in real space forms}, Int. J. Geom. Methods Mod. Phys. 10 (2013), no. 4, 1320006, 9 pp. MR3037234

\bibitem{Goldberg1} S.I. Goldberg and K. Yano, \textit{Polynomial structures on manifolds}, Kodai Math. Sem. Rep. \textbf{22} (1970) 199--218. MR0267478

\bibitem{Goldberg2} S.I. Goldberg and N.C. Petridis, \textit{Differentiable solutions of algebraic equations on manifolds}, Kodai Math. Sem. Rep. \textbf{25} (1973), 111--128. MR0315627

\bibitem{Hr5} C.E. Hretcanu and A.M. Blaga, \textit{Submanifolds in metallic Riemannian manifolds}, arXiv:2183024, (2018).

\bibitem{Hr6} C.E. Hretcanu and A.M. Blaga, \textit{Slant and semi-slant submanifolds in metallic Riemannian manifolds}, arXiv:2191592, (2018).

\bibitem{Hr2} C.E. Hretcanu and M. Crasmareanu, \textit{On some invariant submanifolds in a Riemannian manifold with Golden structure}, An. Stiint. Univ. Al. I. Cuza Iasi. Mat. (N.S.) \textbf{53} (2007), suppl. 1, 199--211. MR2522394

\bibitem{Hr3} C.E. Hretcanu and M.C. Crasmareanu, \textit{Applications of the Golden ratio on Riemannian manifolds}, Turkish J. Math. \textbf{33} (2009), no. 2, 179--191. MR2537561

\bibitem{Hr4} C.E. Hretcanu and M. Crasmareanu, \textit{Metallic structures on Riemannian manifolds}, Rev. Un. Mat. Argentina \textbf{54} (2013), no. 2, 15--27. MR3263648

\bibitem{Li&Liu} H. Li and X. Liu, \textit{Semi-slant submanifolds of a locally product manifold}, Georgian Math. J. \textbf{12} (2005), no. 2, 273--282. MR2174183


\bibitem{Papaghiuc} N. Papaghiuc, \textit{Semi-slant submanifolds of a Kaehlerian manifold}, An. Stiint. Univ. Al. I. Cuza Iasi Sect. I a Mat. \textbf{40} (1994), no. 1, 55--61. MR1328947

\bibitem{Tastan} H.M. Ta\c{s}tan and F. Ozdem, \textit{The geometry of hemi-slant submanifolds of a locally product Riemannian manifold}, Turk. J. Math.  \textbf{39} (2015), 268--284.  doi:10.3906/mat-1407-18

\bibitem{Sahin} B. Sahin, \textit{Slant submanifolds of an almost product Riemannian manifold}, J. Korean Math. Soc. \textbf{43} (2006), no. 4, 717--732. MR2234930

\bibitem{Spinadel} V.W. de Spinadel, \textit{The metallic means family and forbidden symmetries}, Int. Math. J. \textbf{2} (2002), no. 3, 279--288. MR1867157

\end{thebibliography}
\end{document}